\numberwithin{equation}{section}
\newtheorem{theorem}{Theorem}[section]
\newtheorem{corollary}[theorem]{Corollary}
\newtheorem{lemma}[theorem]{Lemma}
\newtheorem{proposition}[theorem]{Proposition}
\theoremstyle{definition}
\newtheorem{definition}[theorem]{Definition}
\theoremstyle{remark}
\newtheorem{remark}[theorem]{Remark}
\newtheorem{example}[theorem]{Example}
\newtheorem{notation}[theorem]{Notation}
\newcommand{\Pic}{\mathrm{Pic}}
\newcommand{\PP}{\mathbb{P}}
\newcommand{\Ptwo}{\mathbb{P}^2}
\newcommand{\CC}{\mathbb{C}}
\newcommand{\GL}{\mathrm{GL}}
\newcommand{\Bir}{\mathrm{Bir}}
\newcommand{\Aut}{\mathrm{Aut}}
\keywords{Jordan property, Cremona group, volume-preserving, birational map}
\begin{document}
\pagestyle{plain}

\title{Jordan bounds for Volume-Preserving Cremona Groups}

\author[J. Wang]{Jiahe Wang}
\address{UCLA Mathematics Department, Box 951555, Los Angeles, CA 90095-1555, USA
}
\email{jiahewang@math.ucla.edu}

\begin{abstract}
 We show that the Jordan constant for the volume-preserving plane Cremona group $\mathrm{Bir}(\Ptwo, \Delta)$ is $12$. We provide a Jordan bound of $144$ for the three-dimensional volume-preserving Cremona group $\mathrm{Bir}(\mathbb P^3,\Delta)$. We also provide a weak geometric Jordan bound of $2^{11} \cdot 3^2$ for $\Bir(\Ptwo)$. \end{abstract}

\maketitle
\tableofcontents

\section{Introduction}
\begin{definition}
A group $\mathcal{G}$ has the \emph{Jordan property}
if there exists a bound $J$ such that for every finite subgroup
$G \le \mathcal{G}$, there exists an abelian normal subgroup $A \trianglelefteq G$ with
\(
[G : A] \le J.
\)
The minimal such bound $J$ is called the \emph{Jordan constant}, denoted $J(\mathcal{G})$. Given a family of groups $\mathcal{F}$, we say that $\mathcal{F}$ has the \emph{Jordan property} if there exists a bound $J$ that applies to all finite subgroups of groups $\mathcal{G}$ in $\mathcal{F}$, and we denote the Jordan constant as $J(\mathcal{F})$.
\end{definition}
In 1878, Camille Jordan \cite{Jordan1878} showed that for any field $k$ of characteristic zero and any positive integer $n$, the linear automorphism group $\mathrm{GL}_n(k)$ satisfies the Jordan property. Collins \cite{Collins2007} found  that $J(\GL_n(k))=(n+1)!$ for $n \geq 71$. The modern theory of Jordan property for birational transformation groups has been developed systematically by many authors; see, for example, Popov's survey \cite{Popov2014JordanGroups} for a general overview. Serre \cite{serre2009minkowskistyleboundorderfinite} proved that the birational automorphism group of $\mathbb{P}^2$, $\mathrm{Bir}(\mathbb{P}^2)$, over characteristic zero possesses the Jordan property with Jordan constant
$
J(\mathrm{Bir}(\mathbb{P}^2)) \leq 2^{10} \cdot 3^4 \cdot 5^2 \cdot 7,
$
and Yasinsky later proved $J(\mathrm{Bir}(\mathbb{P}^2))=7200$ \cite{yasinsky2023jordanconstantcremonagroup}. Prokhorov and Shramov \cite{prok17} showed that $\mathrm{Bir}(\mathbb{P}^3)$ has the Jordan property and $J(\mathrm{Bir}(\mathbb{P}^3))\leq 107495424$. More generally, combining Birkar's boundedness of Fano varieties with their earlier work, Prokhorov and Shramov~\cite{prokhorov2014jordanpropertycremonagroups} proved that the family of groups \(\operatorname{Bir}(X)\), where \(X\) ranges over rationally connected varieties of fixed dimension \(n\) over fields of characteristic \(0\), satisfies the Jordan property; see also related results and refinements in \cite{BandmanZarhin2015JordanSurfaces,MengPerroniZhang2018JordanAut}. Many other groups are known to be Jordan. For example, the family of connected algebraic groups of a fixed dimension has the Jordan property \cite{luo2025jordanpropertyautomorphismgroups,MengZhang2018JordanNonlinear}. As a consequence, $\mathrm{Aut}(X)$ satisfies the Jordan property for any projective variety $X$. In the singular setting, for each fixed dimension \(n\), the family of groups \(\pi_1(\operatorname{Link}(X,x))\), where \((X,x)\) ranges over \(n\)-dimensional klt singularities, satisfies the Jordan property~\cite{BraunFilipazziMoragaSvaldi2022}.

The group we are interested in is the group of volume-preserving birational self-maps of $\mathbb{P}^2$ over the complex numbers:
\[
\mathrm{Bir}(\mathbb{P}^2,\Delta) = \{ f: \Ptwo \dashrightarrow \Ptwo \text{ birational map}, f^*\Omega = \lambda \Omega \text{ for some }\lambda\in \mathbb C^* \},
\]
where $\Omega$ is the standard torus-invariant volume form and $\Delta$ is the pole divisor of $\Omega$, i.e. the coordinate triangle. A recent work of Loginov and Zhang \cite{loginov2024birationalinvariantsvolumepreserving} examines $\Bir(\PP^n,\Delta)$ in general dimensions and shows that in dimensions $n\geq4$
over $\CC$ and $n\geq 3$ over number fields, $\Bir(\PP^n, \Delta)$ is not generated by pseudo-regularizable elements and therefore not simple.

The group $\mathrm{Bir}(\mathbb{P}^2,\Delta)$ satisfies the Jordan property as it is a subgroup of $\mathrm{Bir}(\mathbb{P}^2)$. We would like to find the Jordan constant for this group. We will show that any finite subgroup $G
\leqslant \Bir(\PP^2, \Delta)$ fits into an exact sequence
\[
1 \to A \to G \to G_D \to 1,
\]
where $G_D$ is roughly the group acting on the components of the pole divisor of the volume form and their intersection strata. More specifically, one may regularize $G$ to be an automorphism subgroup of some rational surface $X$, and $G_D$ is the action on the dual complex of $X$ with the induced volume. One may show that $A$ is abelian. Therefore, to find a Jordan bound $J$, we would like to find a bound on $G_D$, which may be further reduced to examining the relevant Calabi-Yau pairs. 

Altogether, this yields a Jordan constant of $12$ for $\Bir(\Ptwo, \Delta)$, which is smaller than the Jordan constant $7200$ for $\mathrm{Bir}(\mathbb{P}^2)$.

\begin{theorem}\label{thmd}
$J(\mathrm{Bir}(\Ptwo, \Delta))=12$.
\end{theorem}

Based on the Jordan constant $12$ of $\mathrm{Bir}(\mathbb{P}^2, \Delta)$, we establish a Jordan bound for the three-dimensional volume-preserving Cremona group $\mathrm{Bir}(\mathbb{P}^3, \Delta)$.
\begin{theorem} \label{thmthree}
$J(\mathrm{Bir}(\mathbb P^3, \Delta)) \leq 144$.
\end{theorem}

We extend the analysis to the geometric Jordan property, which further requires the abelian normal subgroup to be contained in an algebraic torus. In a recent paper, Moraga discussed the geometric Jordan property for birational automorphism groups in the setting of cluster type varieties and posed the conjecture that the group \(\operatorname{Bir}(\mathbb{P}^n)\) satisfies the weak geometric Jordan property \cite[Conjecture 8.5]{moraga}. In this paper, we show that \(\operatorname{Bir}(\mathbb{P}^2)\) satisfies the geometric Jordan property.

\begin{theorem} \label{thmgeo}
$\mathrm{Bir}(\mathbb P^2)$ satisfies the geometric Jordan property. A weak geometric Jordan bound is $2^{11} \cdot 3^2$. 
\end{theorem}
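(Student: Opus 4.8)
The plan is to run a $G$-equivariant minimal model program and then, model by model, to extract not merely an abelian subgroup of bounded index but a \emph{diagonalizable} one, i.e.\ a subgroup conjugate into the standard rank-two torus $T\cong(\CC^*)^2\subset\mathrm{PGL}_3\subset\Bir(\Ptwo)$. The guiding principle is that all maximal algebraic tori of $\Bir(\Ptwo)$ have rank $2$ and are conjugate, and that a finite abelian subgroup lies in an algebraic torus exactly when, on a suitable toric model, it preserves the toric boundary and acts trivially on its dual complex; this is precisely the ``torus part'' $A$ in the exact sequence $1\to A\to G\to G_d\to 1$ preceding Theorem~\ref{thmd}. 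The essential difference from the ordinary Jordan property is that abelianness alone does not suffice: we must guarantee diagonalizability, and the ``weak'' relaxation of Definition~\ref{def2} (dropping normality of $A$) is exactly what lets us take $A=G\cap T$ for a well-chosen torus, even though $G$ need not normalize $T$.

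First I would regularize $G$ and reduce, via the Dolgachev--Iskovskikh equivariant MMP, to the case where $G$ acts faithfully on a $G$-Mori fiber space $X$: either a del Pezzo surface with $\rk\Pic(X)^G=1$, or a conic bundle $\pi\colon X\to\PP^1$. Del Pezzo surfaces of degree at most $5$ have finite automorphism group of order at most $648$ (attained by the Fermat cubic), so there $|G|<288\cdot16$ and one may simply take $A=\{e\}$; this disposes of all ``rigid'' models at once, leaving only models with positive-dimensional automorphism groups.

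For the toric del Pezzo surfaces of degree $\ge 6$, $\Aut(X)^0$ is a connected linear algebraic group with maximal torus $T$, and I would peel off the non-diagonalizable part using the explicit structure of finite subgroups. For $X=\Ptwo$ a finite subgroup of $\mathrm{PGL}_3$ is intransitive, imprimitive, or primitive: the primitive ones have order at most $360$, the imprimitive (monomial) ones satisfy $[G:G\cap T]\le|S_3|=6$, and the intransitive ones fix a point or a line and hence reduce, after blowing up, to the conic-bundle case. For $X=\PP^1\times\PP^1$ one uses $\Aut(X)=(\mathrm{PGL}_2\times\mathrm{PGL}_2)\rtimes\mathbb{Z}/2$ together with the fact that the diagonalizable abelian subgroups of $\mathrm{PGL}_2$ are precisely the cyclic ones, so that after passing to a subgroup of index at most $8$ the group lands in $T$. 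In every toric case this produces a subgroup of $T$ of explicitly bounded index.

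The conic bundles carry the main difficulty. Here I would use the exact sequence $1\to G_K\to G\to G_B\to 1$, with $G_B\le\mathrm{PGL}_2$ the action on the base and $G_K$ preserving each fiber. The base part contributes a cyclic subgroup of a torus of index at most $12$ (from the finite subgroups of $\mathrm{PGL}_2$), while $G_K$ is where diagonalizability can fail: at each singular fiber the two components give a reflection, and these generate an elementary abelian $2$-group that need not lie in any rank-two torus. The crux is therefore to bound this non-toric $2$-part, and, by the same mechanism, the ``elliptic'' abelian subgroups $(\mathbb{Z}/m)^2$ acting by translations on an anticanonical elliptic curve, which after equivariant MMP either force a bounded del Pezzo model or become diagonalizable on a toric one. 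I expect this to be the source of the factor $16=2^4$: any such configuration should contain a diagonalizable subgroup of index at most $2^4$, to be proved using Blanc's classification of finite abelian subgroups of $\Bir(\Ptwo)$ and Beauville's analysis of $p$-elementary subgroups. Assembling the reductions --- the passage to a toric (volume-preserving) model, the index-$12$ torus part supplied by Theorem~\ref{thmd} once a toric boundary is fixed, and the factor $16$ absorbing the non-diagonalizable $2$-elementary contribution --- yields the constant $288\cdot16$. The single hardest point, where the precise value of the constant is decided, is this fiberwise analysis of conic bundles: showing that reflections in singular fibers and elliptic translation subgroups always admit a diagonalizable subgroup of index at most $16$.
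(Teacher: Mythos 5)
Your proposal reproduces the paper's overall architecture --- regularization plus $G$-equivariant MMP, order bounds ($\leq 648$) disposing of del Pezzo surfaces of degree $\leq 5$, torus membership on the models with positive-dimensional automorphism group, and conic bundles as the decisive case, with the factor $16$ correctly traced to the $2$-rank bound of $4$ in Blanc's classification --- but it stops exactly where the paper's real work begins. The conic-bundle step is left as an expectation (``I expect this to be the source of the factor $16$\dots to be proved using Blanc's classification\dots''), and that step is precisely the content of the last two lemmas of Section 7, so this is a genuine gap rather than a stylistic difference. What the paper actually does there: start from Prokhorov's abelian subgroup $A \leqslant G$ with $[G:A]\leq 288$ \cite{prok17}; its image $A_B$ in $\mathrm{PGL}_2$ is cyclic and fixes two points $0,\infty\in\PP^1$; first pass to the kernel $A_0$ of the component-swapping action on the (at most two) singular fibers over $0$ and $\infty$ and contract those fibers equivariantly; then let $S$ be the kernel of the homomorphism $A_{0,F}\to(\mathbb{Z}/2\mathbb{Z})^m$ recording the swaps of the remaining $m$ singular fibers, choose $a\in A_0$ lifting a generator of $A_{0,B}$, and set $A'=\langle S,a\rangle$. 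The combined index is at most $2^{r}\leq 16$, where $r\leq 4$ is the $2$-rank of $A$ (Theorem \ref{thm:finite-abelian-cremona}), and the key verification that $A'$ swaps no fiber components uses that a swapping element over a point other than $0,\infty$ would fix three points of the base, hence lie in the fiber part, hence in $S$ --- a contradiction. Only after this can one contract an $A'$-invariant union of components to reach some $\mathbb{F}_n$ (or $\Ptwo$), where torus membership becomes available. None of this construction, nor the index bookkeeping that actually yields $288\cdot 16$, appears in your proposal; a direct bound on the ``reflection subgroup'' of $G_K$ before passing to an abelian subgroup, as you sketch, would not obviously assemble to the same constant, since the number of singular fibers (hence of reflections) is unbounded and only the $2$-rank \emph{inside an abelian subgroup} is controlled by Blanc.

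Two secondary points. First, the constant is assembled differently than you suggest: in the paper, $288$ is Prokhorov's weak Jordan constant for $\Bir(\Ptwo)$, invoked before any toric consideration, and $16$ is a further index inside that abelian subgroup; Theorem \ref{thmd} and its constant $12$ play no role in Section 7 (the $12$ in the degree-$6$ del Pezzo case is $|\mathrm{Weyl}(A_2+A_1)|$, obtained by contracting the three exceptional curves, not by fixing a volume form). Second, your proposed criterion for torus membership --- preserving a toric boundary and acting trivially on its dual complex on a suitable model --- is itself a nontrivial unproved claim and is not the paper's mechanism; the paper instead blows down $A'$-equivariantly to a surface whose automorphism group is a connected linear algebraic group and cites its lemma that finite abelian subgroups of such groups lie in maximal tori. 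To your credit, your explicit attention to abelian-but-non-toral subgroups (the Klein four group in $\mathrm{PGL}_2$, monomial subgroups of $\mathrm{PGL}_3$) addresses a subtlety that the paper's blanket lemma glosses over; but that care does not substitute for the missing conic-bundle argument on which the whole theorem rests.
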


We fit the analysis of the action on the dual complex in dimension two in our situation to a general setting and we point to more general results in this direction. The notion of regularity of a log-canonical pair $(X,D)$ was introduced by Shokurov \cite{shokurov} and is defined as the dimension of the dual complex $\mathcal{D}(X,D)$ of the boundary divisor $D$, computed on a log resolution. Moraga \cite{mor} defines and studies the dual invariant, the coregularity
\(
\mathrm{coreg}(X)=\dim X-1-\mathrm{reg}(X).
\)
In a recent work, Loginov-Przyjalkowski-Trepalin \cite{przy} extend these notions to varieties endowed with an action of a finite group $G$. By their definition, the $G$-regularity is the maximum of the regularities of all pairs
$(X,D)$, where $D$ is a $G$-invariant log-canonical complement on $X$, i.e., $G\leqslant \Aut(X,D)$ where $\Aut(X,D)$ is the volume-preserving automorphism group. For smooth rational conic bundle surfaces and smooth del Pezzo surfaces, the authors study the dual complex and derive classification results for finite groups according to different values of 
$G$-coregularity. Relating to our situation, saying that a finite automorphism group $G$ on a smooth conic bundle surface or a del Pezzo surface has coregularity zero means that $G\leqslant \Aut(X,D)$ for some Calabi-Yau pair $(X,D)$ of coregularity zero. And since all Calabi-Yau surface pairs of coregularity zero belong to a single crepant birational equivalence class 
\cite[Section 1.3]{ducat2022quarticsurfacesvolumepreserving}, represented by the toric pair $(\mathbb{P}^2,\Delta)$, it follows that $G$ may be viewed as a birational automorphism subgroup of the pair $(\mathbb{P}^2,\Delta)$, and we will see that the converse is also true. In particular, the part of the discussion in Section 3 and 4 about the structure of the group $G$ fits into this general setting in \cite{przy}.

In Section 2, we will review some preliminaries regarding finite subgroups of $\mathrm{Bir}(\Ptwo)$ and volume-preserving maps. In Section 3, we will examine the exact sequence as above, which reduces the task of finding a Jordan bound to examining certain Calabi-Yau pairs on minimal $G$-surfaces. In Section 4, we explicitly find a Jordan bound for $\Bir(\Ptwo, \Delta)$. In Section 5, we show that this Jordan bound is minimal and we present a way to find volume-preserving subgroups regularized on del Pezzo surfaces. In Section 6, we extend the analysis of the Jordan property to the three-dimensional volume-preserving Cremona group $
\Bir(\PP^3, \Delta)$. In Section 7, we find a weak geometric Jordan bound for the plane Cremona group.

\subsection*{Acknowledgments}
I sincerely thank Joaquín Moraga for proposing this project and helpful discussions, the ideas from which essentially form the outline of the paper. I am grateful to Burt Totaro for helpful comments, corrections, and suggestions. I am grateful to Zinovy Reichstein and Federico Scavia for results on torsion index. I sincerely thank Zhijia Zhang for valuable conversation and for the idea of finding volume-preserving subgroups on del Pezzo surfaces. I am grateful to Konstantin Loginov, Victor Przyjalkowski, and Andrey Trepalin for helpful comments, suggestions, and simplification of the proofs of groups acting on dual complexes. I am grateful to Ko Honda, Raphaël Rouquier, Jas Singh and José Yáñez for helpful comments.
\newpage
\section{Preliminaries}
We work over the field of complex numbers.
\subsection{Finite subgroups of $\boldsymbol{\mathrm{Bir}(\mathbb{P}^2)}$}

In \cite{dolgachev2009finitesubgroupsplanecremona}, Dolgachev and Iskovskikh describe finite subgroups of the plane Cremona group $\mathrm{Bir}(\mathbb{P}^2)$. The procedure goes as follows:
\begin{itemize}
    \item Any finite subgroup $G \leqslant \mathrm{Bir}(\mathbb{P}^2)$ can be regularized on a rational surface $X$ such that $G$ may be viewed as a finite subgroup of $\mathrm{Aut}(X)$. That is, there exists a birational map $f: X \dashrightarrow \mathbb{P}^2$ such that $f^{-1}Gf \leqslant \mathrm{Aut}(X)$.
    \item Performing a $G$-equivariant minimal model program to $X$, we get a minimal $G$-surface $X'$. One can show that such a minimal surface is either a conic bundle or a del Pezzo surface.
    \item For each minimal $G$-surface, the authors classify the automorphism group of the surface and its finite subgroups, and therefore one gets a list of finite subgroups of the plane Cremona group.
\end{itemize}
\begin{definition}[{\cite[Section 3.2]{dolgachev2009finitesubgroupsplanecremona}}]
Let $G$ be a finite group. A \emph{$G$-surface} is a pair $(S,\rho)$, where $S$ is a
nonsingular projective surface and $\rho$ is an isomorphism from $G$ to a group of
automorphisms of $S$.

A morphism of $G$-surfaces
\(
(S,\rho) \rightarrow (S',\rho')
\)
is a morphism of surfaces $f \colon S \to S'$ such that
\(
\rho'(G) = f \circ \rho(G) \circ f^{-1}.
\)
\end{definition}
\begin{definition}[{\cite[Definition~3.7]{dolgachev2009finitesubgroupsplanecremona}}]
A \emph{minimal $G$-surface} is a $G$-surface $(S,\rho)$ such that any birational
morphism of $G$-surfaces
\(
(S,\rho)\rightarrow (S',\rho')
\)
is an isomorphism. 
\end{definition}

\begin{theorem}[{\cite[Theorem~3.8]{dolgachev2009finitesubgroupsplanecremona}}]
Let $S$ be a minimal rational $G$-surface. Then either
\begin{enumerate}
  \item $S$ admits a structure of a conic bundle with $\Pic(S)^G \cong \mathbb Z^2$, or
  \item $S$ is isomorphic to a del Pezzo surface with $\Pic(S)^G \cong \mathbb Z$.
\end{enumerate}
\end{theorem}

\begin{remark}

In case~(1), the action of $G$ on $S$ preserves the fibration $S \to \mathbb P^1$,
so that $G \leqslant \Aut(S,\pi)$. A conic bundle minimal $G$-surface is either isomorphic to the Hirzebruch surface $F_n$ or to a surface obtained
from $F_n$ by blowing up a finite set of points, no two lying in a fiber of the
ruling; in particular, each singular fiber has exactly two irreducible components. Del Pezzo surfaces of degree $7$ are not $G$-minimal, since they admit a $G$-equivariant contraction of a $(-1)$-curve. In degree $8$, there are two del Pezzo surfaces, namely $\mathbb P^1 \times \mathbb P^1$
and the Hirzebruch surface $F_1$.
The surface $F_1$ contains a unique $(-1)$-curve, which is invariant under
all automorphisms; hence $F_1$ is not $G$-minimal. As a consequence, a minimal del Pezzo $G$-surface is either isomorphic to
$\mathbb P^2$, $\mathbb P^1 \times \mathbb P^1$, or of degree $d \le 6$.
\end{remark}

\subsection{Volume-preserving map}
\begin{definition}
A \emph{volume form} $\omega_X$ on a normal projective variety $X$ is a nonzero top degree rational differential form.  
\end{definition}

\begin{definition}
A birational morphism $f: (X,\omega_X) \dashrightarrow (Y,\omega_Y)$ is \emph{volume-preserving} if $f^*\omega_Y = \lambda \omega_X$ for some $\lambda \in \mathbb{C}^*$.

\end{definition}

\begin{definition}
A \emph{Calabi-Yau pair} $(X,D_X)$ consists of a \emph{normal projective} variety $X$
and an effective integral Weil divisor $D_X$ such that
\(
K_X + D_X \sim 0,
\)
and the pair $(X,D_X)$ is log canonical.
There exists a rational volume form $\omega_X$ on $X$, unique up to
multiplication by a nonzero constant, such that
\(
D_X + \mathrm{div}(\omega_X)=0.
\)
\end{definition}

\begin{remark}
More generally, one may define a Calabi-Yau pair by allowing higher index, i.e.\
requiring only that $K_X + D \sim_{\mathbb Q} 0$; in this paper, however, we work
exclusively with the index~$1$ case and simply refer to such pairs as
Calabi-Yau pairs.
\end{remark}

\begin{definition}[{\cite[Definition 1.6, Remark 1.7]{corti}}]\label{crep}
A birational map of Calabi-Yau pairs $f: (X, D_X) \dashrightarrow (Y, D_Y)$ is \emph{volume-preserving} if the corresponding $f: (X, \omega_X) \dashrightarrow (Y, \omega_Y)$ is volume-preserving. In particular, if $f\colon (X,D_X)\to (Y,D_Y)$ is a birational morphism, then $f$ is volume-preserving if and only if $f_*(D_X) = D_Y$. We say two pairs $(X, D_X)$ and $(Y, D_Y)$ are \emph{crepant birational} if there is a volume-preserving map $f: (X, D_X) \dashrightarrow (Y, D_Y)$.
\end{definition}

\begin{definition}
The \emph{standard torus-invariant volume form} on $\mathbb{P}^n$ with coordinates $x_0,x_1,...,x_n$ is given by the formula
\[
\Omega = \frac{dx_1}{x_1} \wedge \frac{dx_2}{x_2}\wedge ...\wedge \frac{dx_n}{x_n}
\]
in the affine chart $\{{x_0= 1}\}$. We let $\Delta$ denote $-\mathrm{div}(\Omega)$. 
Let $\mathrm{Bir}(\mathbb{P}^n, \Delta)$ denote the volume-preserving birational automorphisms of $\mathbb{P}^n$ with respect to $\Delta$. In dimension 2, the pole divisor $\Delta$ is the coordinate triangle in $\mathbb{P}^2$. An example of a volume-preserving birational map of $\mathbb{P}^2$ is the standard quadratic transformation $[x_0,x_1,x_2] \mapsto [x_1x_2,x_0x_2,x_0x_1]=[1/x_0,1/x_1,1/x_2]$.
\end{definition}

\begin{notation} We use $\Omega$ for the standard torus-invariant volume form on $\mathbb{P}^n$ and let $\Delta = -\mathrm{div}(\Omega)$. For a general variety $X$, we use $\omega_X$ to denote a volume form, and we let $D_X = -\mathrm{div}(\omega_X)$. Also, we refer to $D_X = -\mathrm{div}(\omega_X)$ as the pole divisor, although it may differ from the usual pole divisor. We use $\Aut(X,\omega_X)$ and $\Bir(X,\omega_X)$ to denote the group of volume-preserving automorphisms and birational automorphisms of $X$ with volume form $\omega_X$. In the case of Calabi-Yau pairs, we also use the notation $\Aut(X,D_X)$ and $\Bir(X,D_X)$.
\end{notation}

\begin{remark}
For a birational map $f: X \dashrightarrow (Y, \omega_Y)$, we can endow $X$ with a volume form $\omega_X = f^*\omega_{Y}$ such that the map is volume-preserving. The form $\omega_X$ is called the \emph{pullback volume form}. Note that the pullback of a form with effective pole divisor, i.e., a form coming from a Calabi-Yau pair, may not have effective pole divisor.
\end{remark}

\begin{example}
    
We give the pullback volumes of $\Delta$ under some blow-ups of $(\PP^2, \Delta)$. The right-hand side of the picture shows $\Delta$ and the left-hand side is the pole divisor $D_{F_1}$ of the pullback volume form, where $F_1 = \mathrm{Bl}_p \PP^2$. Note that $D_{F_1}$ may not be effective.

Blowing up at a point of multiplicity 2 introduces poles along the exceptional divisor $E$ for the pullback volume form, i.e., the pole divisor of the pullback has an additional component which is the exceptional divisor:
\begin{figure}[H]
\centering
\includegraphics[width=7.5cm]{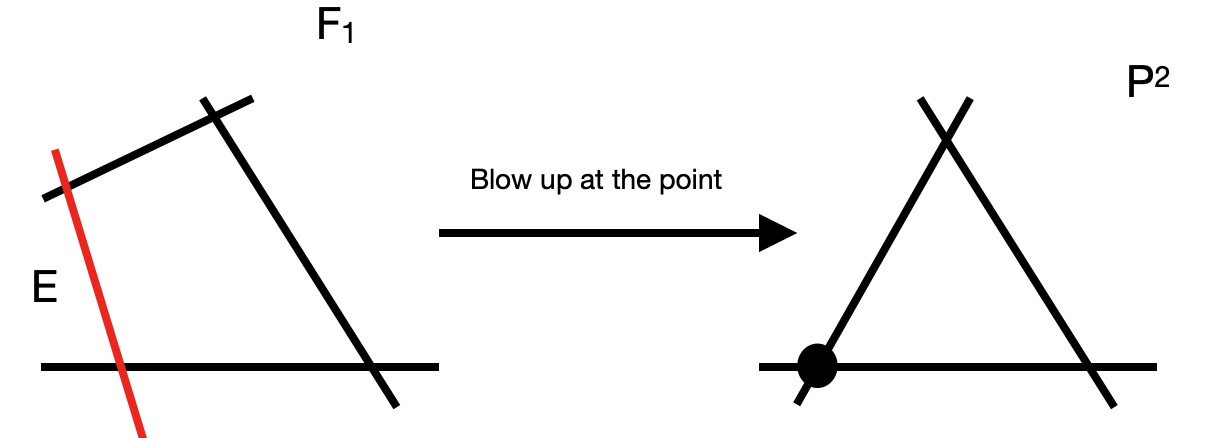}
\end{figure}

Blowing up at a point of multiplicity 1 does not introduce any new component to the pole divisor of the pullback volume form:
\begin{figure}[H]
\centering
\includegraphics[width=7cm]{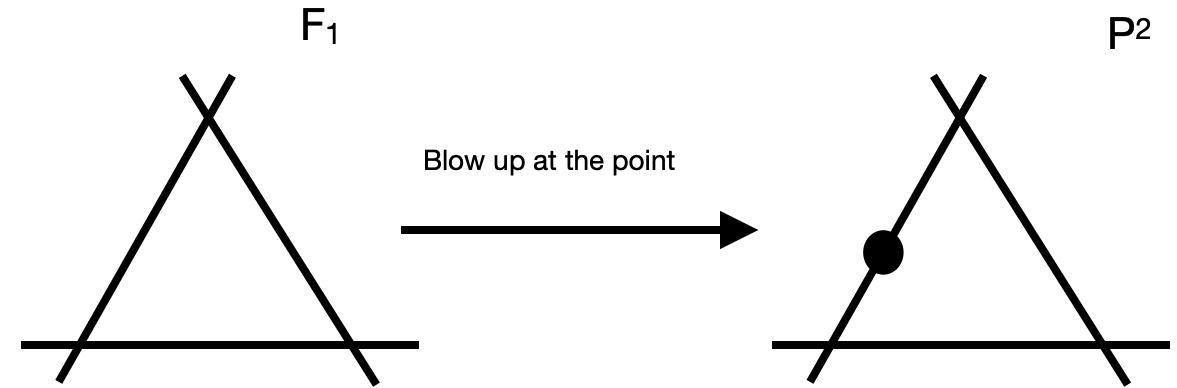}
\end{figure}
Blowing up at a point outside the pole divisor introduces zeros along the exceptional divisor $E$ for the pullback volume form, i.e., the pole divisor of the pullback has a negative part on the exceptional divisor:

\begin{figure}[H]
\centering
\includegraphics[width=7cm]{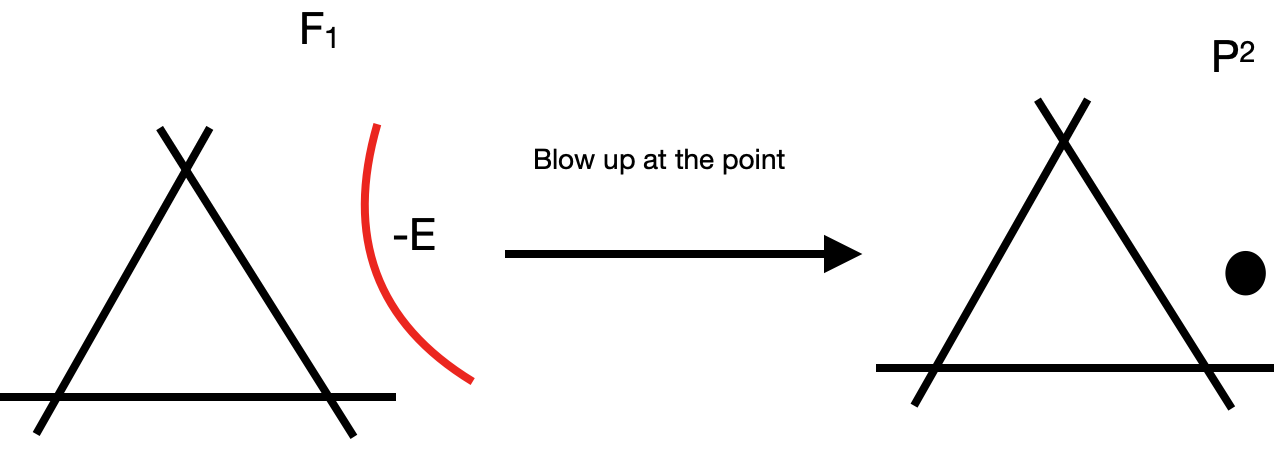}
\end{figure}
\end{example}

\section{Action on the dual complex}

Given a finite volume-preserving subgroup $G \leqslant \mathrm{Bir}(\mathbb{P}^2, \Delta)$, we would like to derive an exact sequence $1 \to A \to G \to G_D \to 1$, where $G_D$ is roughly the group acting on the components of the pole divisor of the volume form and their intersection strata, and $A$ is the subgroup that fixes all components and their intersection strata. To do that:
\begin{itemize}
    \item We need to regularize $G$ on a rational surface $X$ such that $G \leqslant \mathrm{Aut}(X, \omega_X)$ for some volume form $\omega_X$ on $X$. 
    \item We would like $D_X = -\mathrm{div}(\omega_X)$ to be effective for the following reason: If this is true, then $(X,D_X)$ is a Calabi-Yau pair. Calabi-Yau pairs are better to classify than non-effective pairs.
    \item We would like to show that $A$ is abelian, so that finding a Jordan bound $J$ will be reduced to finding a bound for the group $G_D$. After classifying all Calabi-Yau pairs, we examine the possible groups acting on the components of the pole divisors and their intersection strata, which give a bound for $G_D$. 
\end{itemize}

We address the above by the following lemmas.
\begin{lemma} If $G$ is a finite subgroup of $ \mathrm{Bir}(\mathbb{P}^2,\Delta)$, then $G\leqslant \mathrm{Aut}(X,\omega_X)$ for some minimal $G$-surface $X$ and some volume form $\omega_X$ on $X$.
\end{lemma}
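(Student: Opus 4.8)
The plan is to regularize $G$ to an honest automorphism group, transport the volume form along the regularizing map, and then run the $G$-equivariant minimal model program while carrying the volume form down to a $G$-minimal model. The conceptual input is minimal: I only combine the existence of a biregular model for a finite group action (recalled in Section 2) with the functoriality of rational top-degree forms under birational maps.

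First I would use the standard regularization of finite subgroups of $\Bir(\Ptwo)$ from Section 2 to fix a smooth rational surface $Y$ and a birational map $f: Y \dashrightarrow \Ptwo$ with $f^{-1}Gf \leqslant \Aut(Y)$. Identifying $G$ with $f^{-1}Gf$, set $\omega_Y := f^*\Omega$, the pullback volume form. Since $f^*$ is an isomorphism on the space of rational top-degree forms, $\omega_Y$ is again a volume form on $Y$. The key computation is that for $g \in G$ with $g^*\Omega = \lambda \Omega$, the induced automorphism $\tilde g = f^{-1}gf$ of $Y$ satisfies
\[
\tilde g^{\,*}\omega_Y = (f \circ \tilde g)^*\Omega = (g \circ f)^*\Omega = f^* g^*\Omega = f^*(\lambda \Omega) = \lambda\, \omega_Y,
\]
because $f \circ \tilde g = g \circ f$. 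Hence $G \leqslant \Aut(Y,\omega_Y)$. Note that $f$ itself need not be volume-preserving and $\omega_Y$ need not have effective pole divisor, but neither property is required for this lemma.

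Second, I would apply the $G$-equivariant MMP to $Y$, producing a $G$-equivariant birational morphism $\pi: Y \to X$ onto a $G$-minimal surface $X$, so that each $g \in G$ descends to an automorphism $g' = \pi g \pi^{-1}$ of $X$ with $\pi \circ g = g' \circ \pi$. I then transport the form once more by setting $\omega_X := \pi_*\omega_Y$, the unique rational top-degree form with $\pi^*\omega_X = \omega_Y$ (which exists since $\pi$ is birational). Pulling back and using $G$-equivariance gives
\[
\pi^*\big((g')^*\omega_X\big) = (g' \circ \pi)^*\omega_X = (\pi \circ g)^*\omega_X = g^*\omega_Y = \lambda\, \omega_Y = \pi^*(\lambda\, \omega_X),
\]
and since $\pi^*$ is injective on rational forms we conclude $(g')^*\omega_X = \lambda\, \omega_X$. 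Thus $G \leqslant \Aut(X,\omega_X)$ with $X$ a $G$-minimal surface, as desired.

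The step I would treat most carefully is the transport of $\omega_Y$ through the $G$-MMP: one must verify that each elementary $G$-equivariant contraction carries a genuine volume form to a genuine volume form and preserves the volume-preserving property of the descended action. This is precisely the content of the injectivity/isomorphism of $\pi^*$ on $n$-forms together with the $G$-equivariance of the contractions, so I do not expect a real obstacle. The genuinely delicate point, namely the effectivity of $D_X = -\mathrm{div}(\omega_X)$, is not needed here and is deferred to the subsequent lemma.
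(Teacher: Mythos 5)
Your proof is correct and takes essentially the same route as the paper: the paper's proof simply regularizes $G$ directly on a $G$-minimal rational surface $X$ and defines $\omega_X = f^*\Omega$ along the regularization map $f: X \dashrightarrow \Ptwo$, which coincides with your two-step construction since $\pi_* f^*\Omega$ is exactly the pullback of $\Omega$ along the composite birational map $X \dashrightarrow \Ptwo$. Your version just makes explicit the equivariance computations and the transport of the form through the $G$-MMP, which the paper leaves implicit.
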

\begin{proof}
Any finite subgroup $G\leqslant \Bir(\PP^2,\Delta)$ can be regularized on a minimal rational $G$-surface $X$ \cite[Lemma 3.5]{dolgachev2009finitesubgroupsplanecremona}. Given the regularization map $f: X\dashrightarrow \Ptwo$, we can define $\omega_X = f^*\Omega$, then $G\leqslant \mathrm{Aut}(X,\omega_X)$.
\end{proof}
Note that this $\omega_X$ is not guaranteed to have no zeros, therefore $D_X$ may not be effective. We show the following.
\begin{lemma}\label{lemm}
If we have a finite subgroup $G\leqslant \mathrm{Bir}(\mathbb{P}^2, \Delta)$, then it can be regularized on a minimal $G$-surface $X$ under some birational map $f: X \dashrightarrow \mathbb{P}^2$ such that $\omega_X = f^*\Omega$ has no zeros.
\end{lemma}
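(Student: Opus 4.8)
The plan is to start from a smooth regularization and then move within crepant birational models, using the elementary fact that contracting curves can only \emph{remove} zeros of the pullback form, never create them. First I would regularize $G$ on a smooth projective rational surface $Y$ with $G \leqslant \Aut(Y,\omega_Y)$, where $\omega_Y = \phi^*\Omega$ for a birational map $\phi\colon Y \dashrightarrow \Ptwo$; this $Y$ need not be minimal and $\omega_Y$ may well have zeros. The engine of the argument is the following computation: for any $G$-equivariant birational morphism $\pi\colon V \to W$ of smooth surfaces with $\omega_V = \pi^*\omega_W$, one has
\[
\mathrm{div}(\omega_V) = \pi^*\mathrm{div}(\omega_W) + K_{V/W},
\]
where $K_{V/W} \geq 0$ is effective and $\pi$-exceptional. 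Writing $D = -\mathrm{div}(\omega)$, this gives $D_W = \pi_* D_V$, so if $\omega_V$ has no zeros (i.e.\ $D_V \geq 0$) then neither does $\omega_W$. Thus it suffices to produce \emph{one} smooth $G$-model with no zeros: afterwards any $G$-equivariant MMP, being a sequence of divisorial contractions, automatically preserves this property down to a $G$-minimal model.

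Next I would eliminate the zeros on $Y$. Since $g^*\omega_Y = \lambda_g\,\omega_Y$ for each $g \in G$, the divisor $\mathrm{div}(\omega_Y)$ is $G$-invariant, and hence so is its positive part $Z$, the zero divisor. Matching the blow-up computation in the Example, $\mathrm{ord}_E\,\mathrm{div}(\omega_Y)$ equals the discrepancy $a(E;\Ptwo,\Delta)$, which is negative or zero for divisors not contracted by $\phi$; hence every component of $Z$ must be $\phi$-exceptional, so $\mathrm{Supp}(Z)$ is a contractible configuration of curves. I would then contract it $G$-equivariantly---repeatedly contracting the $G$-orbit of a $(-1)$-curve contained in $\mathrm{Supp}(Z)$---to reach a smooth surface $Y'$ carrying a regular $G$-action. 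By the pushforward formula above, the resulting form $\omega_{Y'} = \pi_*\omega_Y$ satisfies $D_{Y'} = \pi_* D_Y \geq 0$, i.e.\ it has no zeros, while its poles still record the strict transform of $\Delta$ together with the divisors over the nodes; thus $(Y', D_{Y'})$ is a genuine Calabi--Yau pair crepant birational to $(\Ptwo, \Delta)$.

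Finally I would run a $G$-equivariant MMP on $Y'$ to reach a $G$-minimal surface $X$ (a conic bundle or a del Pezzo surface); since every step is a $G$-equivariant divisorial contraction, the engine above forces $D_X \geq 0$, so $\omega_X = f^*\Omega$ has no zeros for the composite $f\colon X \dashrightarrow \Ptwo$, as required. The main obstacle is the middle step: verifying that the $G$-invariant zero locus $Z$ really can be contracted $G$-equivariantly to a \emph{smooth} surface without getting stuck---that is, that at each stage $\mathrm{Supp}(Z)$ contains a $(-1)$-curve whose $G$-orbit is a disjoint union of $(-1)$-curves, and that the process terminates with all zeros removed. This rests on the fact that $\mathrm{Supp}(Z)$ is exceptional over $\Ptwo$, hence negative-definite and contractible, combined with the equivariant contraction theory of Dolgachev--Iskovskikh; once this is in place the pushforward formula makes all the bookkeeping automatic, the preservation of effectivity under contraction being a one-line consequence of $K_{V/W} \geq 0$.
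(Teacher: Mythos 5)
Your opening and closing steps are fine: the formula $\mathrm{div}(\pi^*\omega_W)=\pi^*\mathrm{div}(\omega_W)+K_{V/W}$, the consequence that pushing forward along a birational morphism of smooth surfaces preserves the absence of zeros, and the identification $\mathrm{ord}_E\,\mathrm{div}(\phi^*\Omega)=a(E;\Ptwo,\Delta)$ are all correct; the last of these is also how one sees that every component of the zero divisor $Z$ is contracted by $\phi$, and the pushforward ``engine'' is exactly how the paper handles the final $G$-equivariant MMP step.

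The gap is the middle step. From ``every component of $Z$ is $\phi$-exceptional'' you infer that $\mathrm{Supp}(Z)$ is negative definite, hence contractible; but $\phi\colon Y\dashrightarrow\Ptwo$ is only a birational \emph{map}, not a morphism, and curves contracted by a birational map need not be negative definite (the standard quadratic involution contracts the three coordinate lines, which have self-intersection $+1$). A concrete failure: blow up a point $p\notin\Delta$ (exceptional $E_1$, zero of order $1$), then blow up the point $E_1\cap\tilde L$ where $\tilde L$ is the strict transform of a general line $L$ through $p$ (exceptional $E_2$, zero of order $2$), and finally contract $\tilde L$, which is now a $(-1)$-curve. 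On the resulting smooth surface the pullback of $\Omega$ vanishes along the images $C_1,C_2$ of $E_1,E_2$, with $C_1^2=-2$, $C_2^2=0$, $C_1\cdot C_2=1$; this configuration is not negative definite and $C_2$ can never be contracted, so your procedure gets stuck on such a model. Since a regularization of $G$ is produced abstractly, nothing guarantees that $\phi$ is a morphism, and you cannot simply resolve its indeterminacy, because the indeterminacy locus of $\phi$ is not $G$-invariant (only the zero divisor is). Note also that even when $\mathrm{Supp}(Z)$ is negative definite, that alone does not yield a contraction to a \emph{smooth} surface (a single $(-2)$-curve contracts to an $A_1$ point); one needs the stronger fact that $Z$ is upward closed in a tree of point blow-ups, which you never establish. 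The paper avoids all of this by choosing the model correctly from the start: it resolves the union of the \emph{minimal} indeterminacy loci of all $g\in G$ (de Fernex--Ein), and proves no zeros can appear there, since a zero along an exceptional divisor $E$ of the minimal resolution of a volume-preserving $\varphi$ would propagate to a maximal divisor $E_{\max}$ above $E$, which by de Fernex--Ein is not contracted by the second projection $q$, so that $q_*p^*\Omega=\Omega$ would vanish along $q_*E_{\max}$ --- a contradiction. Your proposal is missing precisely this choice-of-model idea (or, alternatively, an argument that one may blow up the $G$-orbit of the full indeterminacy cluster of $\phi$ to make $\phi$ a genuine morphism, after which your contraction scheme could be repaired).
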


\begin{proof}
Let $\varphi: \mathbb{P}^2 \dashrightarrow \mathbb{P}^2$ be a volume-preserving map, and consider the minimal resolution of indeterminacy
\[
\begin{tikzcd}
& X \arrow[ld, "p"'] \arrow[rd, "q"] & \\
\mathbb{P}^2 \arrow[rr, dashed, "\varphi"] & & \mathbb{P}^2 .
\end{tikzcd}
\]
We claim that the pullback volume form on $X$ along $p$ has no zeros along the exceptional divisors, and hence no zeros at all. Indeed, suppose that $E$ is an exceptional divisor of $p$ along which the pullback form vanishes, then every exceptional divisor lying above $E$ also carries zeros. This follows from that at each step the pole divisor remains simple normal crossing with all coefficients $\leq 1$: by induction if we blow up the intersection of two components with coefficients $a$ and $b$, then the new exceptional divisor has coefficient $a+b-1 \leq 1$; if we blow up a smooth point on a component with coefficient $a$, then the new exceptional divisor has coefficient $a-1 \leq 1$; and if we blow up a point outside the boundary, then the new exceptional divisor has coefficient $-1 \leq 1$. So if an exceptional divisor has negative coefficient $a$, then blowing up on a point on the exceptional divisor will lead to coefficient $\leq a+1-1 < 0$. Hence every exceptional divisor lying above $E$ also carries zeros. Let $E_{\max}$ be a maximal exceptional divisor lying above $E$, i.e. no exceptional divisor lies above \(E_{\max}\). By \cite[Lemma 2.11(2)]{deFernexEin2002}, $E_{\max}$ is not contracted by $q$, so $q_* p^* \Omega$ would vanish along $q_* E_{\max}$. This is impossible, since
\(
q_* p^* \Omega \;=\; \varphi^{-1*}\Omega \;=\; \Omega,
\)
as $\varphi^{-1}$ is volume-preserving and $\Omega$ has no zeros.

Now let $G \leqslant \mathrm{Bir}(\PP^2, \Delta)$ be a finite subgroup. To find a regularization of $G$, by \cite{deFernexEin2002} we resolve the union of the minimal indeterminacy loci of all maps $\varphi$ in $G$. Each such resolution does not introduce any zeros to the pullback volume form, so the resulting model still carries a nowhere vanishing volume form.
To obtain a minimal $G$-surface, one runs a $G$-equivariant minimal model program by blowing down $(-1)$-curves. This process does not introduce any zeros to the volume form.
\end{proof}

We show a few more lemmas about the dual complex of such pairs $(X, D_X)$ from the above, i.e., Calabi-Yau pairs obtained from $(\Ptwo,\Delta)$ through a sequence of blow-ups at points on $\Delta$ and blow-downs along $(-1)$-curves.

\begin{lemma}\label{cycle}
$D_X$ is either a cycle of $\mathbb{P}^1$'s meeting transversely, or an irreducible rational curve with one nodal point.
\end{lemma}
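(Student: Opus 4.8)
The plan is to track the pair $(X,D_X)$ through the explicit sequence of blow-ups and blow-downs that defines it, starting from the triangle $\Delta\subset\Ptwo$ and maintaining the stated dichotomy as an inductive invariant. Two structural facts frame the whole argument. First, since $K_X+D_X\sim 0$, the divisor $D_X$ is anticanonical, so adjunction gives $2p_a(D_X)-2=D_X\cdot(K_X+D_X)=0$, i.e. $p_a(D_X)=1$; moreover $X$ is a smooth rational surface and $D_X$ is reduced and effective by Lemma~\ref{lemm}. Second, $(X,D_X)$ is crepant birational to the log smooth toric pair $(\Ptwo,\Delta)$, hence log canonical; because $X$ is smooth, log canonicity forces $D_X$ to have only nodal (ordinary double point) singularities, ruling out cusps, tacnodes, and ordinary multiple points of multiplicity $\geq 3$, all of which have log canonical threshold $<1$.

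For the induction, the base case is clear: $\Delta$ is a cycle of three $\PP^1$'s meeting transversely. For the step I distinguish four elementary moves. A blow-up at a node of $D$ (a multiplicity-$2$ point) introduces the exceptional curve $E$ as a new boundary component, as recorded in the blow-up computations of Section~2; this separates the two branches and inserts $E$ between them, increasing the length of the cycle by one (and, in the degenerate one-component case, turning the single nodal $\PP^1$ into a length-two cycle). A blow-up at a smooth point of $D$ introduces no new component, and the strict transform of the affected curve stays a smooth rational curve meeting its neighbors transversely away from the center, so the combinatorial type is unchanged. Both blow-up moves thus preserve the invariant.

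The main obstacle is the blow-down of a smooth rational $(-1)$-curve $C$, where I must verify that the cycle (or nodal) structure survives. If $C\subseteq D_X$, then $(K_X+D_X)\cdot C=0$ together with $C^2=K_X\cdot C=-1$ forces $(D_X-C)\cdot C=2$, so $C$ meets the rest of the boundary at exactly two nodes; contracting $C$ merges its two neighbors at the image point, and since $C\cong\PP^1$ meets them transversely at \emph{distinct} points, their tangent directions remain distinct, producing a genuine transverse node. This shortens a length-$n$ cycle to a length-$(n-1)$ cycle, and collapses a length-two cycle to a single rational curve with one node, which is precisely the second alternative. If instead $C\not\subseteq D_X$, then $D_X\cdot C=-K_X\cdot C=1$, so $C$ meets $D_X$ transversely at one point, the contraction is an isomorphism near that point, and the combinatorial type of $D_X$ is untouched. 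Effectivity of $D_X$ is preserved throughout by Lemma~\ref{lemm}, so the invariant propagates and the induction closes.

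I would also note the conceptual shadow of this computation, which gives an independent proof. Running subadjunction with the different on each component $D_i\cong\PP^1$ yields $\deg(K_{D_i}+\mathrm{Diff}_{D_i})=0$; since $X$ is smooth the different is supported exactly where $D_i$ meets the rest of $D_X$, and the degree-$0$ condition forces each component to meet the rest in total degree $2$, which forbids leaves (tails) and extra genus and pins down the global shape as a single cycle or a single nodal $\PP^1$. Equivalently, the dual complex $\mathcal D(X,D_X)$ is a crepant birational invariant and $\mathcal D(\Ptwo,\Delta)\cong S^1$, so $\mathcal D(X,D_X)\cong S^1$; combined with $p_a(D_X)=1$ and log canonicity this leaves exactly the two listed possibilities. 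The one point demanding genuine care remains the blow-down step, namely confirming that contracting a boundary $(-1)$-curve yields a transverse node rather than a tangency or a higher-multiplicity point.
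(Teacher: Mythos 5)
Your proof is correct and takes essentially the same route as the paper's: an induction through the blow-ups at boundary points and the blow-downs of smooth rational $(-1)$-curves, with the key computation $C\cdot(-K_X)=C^2+2=1$ splitting into the cases $C\subseteq D_X$ and $C\not\subseteq D_X$. Your write-up is in fact more careful than the paper's brief argument (verifying transversality of the merged branches after contraction, and using log canonicity to rule out worse-than-nodal singularities), and the closing subadjunction/dual-complex sketch is a legitimate independent check, but the core mechanism is identical.
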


\begin{proof}
Since $\Delta$ is a cycle of $\PP^1$'s, blowing up points on $\Delta$ preserves the cycle structure. Let $C$ be a $(-1)$-curve on $X$. Then $C \cdot (-K_X) = C^2 + 2 = 1$. If $C$ is not a component of $D_X$, then $C$ intersects with $D_X$ at one point, so contracting $C$ does not change the configuration of $D_X$. If $C$ is a component of the divisor $D_X$, then contracting $C$ will either make $D_X$ remain a cycle of smooth $\PP^1$'s or, in the case that there are only two components, then contracting one component will result in a rational curve with one node.
\end{proof}

\begin{definition}[{\cite[Section 2.2]{loginov2024birationalinvariantsvolumepreserving}}]
Let \( D_X = \sum D_i \) be a Cartier divisor on a smooth variety \( X \).
The \emph{dual complex}, denoted by \(\mathcal{D}(D_X)\), of a simple normal crossing divisor
\( D_X = \sum_{i=1}^r D_i \) on a smooth variety \( X \) is a CW-complex constructed as follows.
The simplices \( v_Z \) of \(\mathcal{D}(D_X)\) are in bijection with irreducible components
\( Z \) of the intersection \(\bigcap_{i \in I} D_i\) for any non-empty subset
\( I \subset \{1, \ldots, r\} \), and the vertices of \( v_Z \) correspond to the components
\( D_i \) with \( i \in I \). In particular, the dimension of \( v_Z \) is equal to
\(\# I - 1\). We call \( Z \) a \emph{stratum} of \( D_X \).

The gluing maps are constructed as follows. For any non-empty subset
\( I \subset \{1, \ldots, r\} \), let \( Z \subset \bigcap_{i \in I} D_i \) be a stratum,
and for any \( j \in I \), let \( W \) be the unique component of
\(\bigcap_{i \in I \setminus \{j\}} D_i\) containing \( Z \).
Then the gluing map is the inclusion of \( v_W \) into \( v_Z \) as a face of \( v_Z \)
that does not contain the vertex \( v_i \) corresponding to \( D_i \).
Note that the dimension of \(\mathcal{D}(D_X)\) does not exceed \(\dim X - 1\).
If \(\mathcal{D}(D_X)\) is empty, i.e.,\ \( D_X = 0 \), we say that
\(\dim \mathcal{D}(D_X) = -1\).

We denote by \( D_X^{=1} \) the sum of the components of \( D_X \) with coefficient \( 1 \).
For an lc pair \((X, D_X)\), we define \(\mathcal{D}(X, D_X)\) as
\(\mathcal{D}(D_Y^{=1})\), where \( f \colon (Y, D_Y) \to (X, D_X) \) is a log resolution
of \((X, D_X)\), so that we have
$K_Y + D_Y = f^*(K_X + D_X).$ It is known that the PL–homeomorphism type of 
\(\mathcal{D}(D_Y^{=1})\) is independent of the choice of log resolution; 
see \cite[Proposition~11]{tommaso}.
\end{definition}

\begin{lemma}
The dual complex of $(X, D_X)$ is homeomorphic to $S^1$.
\end{lemma}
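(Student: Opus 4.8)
The plan is to invoke Lemma~\ref{cycle}, which already tells us that $D_X$ is either a cycle of $\mathbb{P}^1$'s meeting transversely or an irreducible rational curve with a single node, and then to compute the dual complex $\mathcal{D}(X, D_X)$ in each of these two cases.

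First I would treat the cycle case. Suppose $D_X = D_1 + \cdots + D_n$ is a cycle, so that each $D_i \cong \mathbb{P}^1$, consecutive components $D_i, D_{i+1}$ (indices modulo $n$) meet transversely at a single point, and no three components share a point. Then $D_X$ is already simple normal crossing, so $\mathcal{D}(X, D_X) = \mathcal{D}(D_X)$ with no resolution needed. The $0$-simplices are the vertices $v_{D_1}, \ldots, v_{D_n}$, and the $1$-simplices are the intersection points $D_i \cap D_{i+1}$, each an edge joining $v_{D_i}$ to $v_{D_{i+1}}$. This is precisely the cycle graph on $n$ vertices, which is homeomorphic to $S^1$. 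When $n = 2$, the two components meet in two points, producing two edges between the same pair of vertices, i.e.\ a digon, which is again $S^1$.

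The harder case, and the one I expect to be the main obstacle, is the irreducible nodal curve, since here $D_X$ is not SNC and one must pass to a log resolution before reading off the dual complex. I would blow up the node $p$ to obtain $f \colon Y \to X$. The strict transform $\tilde{D}$ is smooth (it is the normalization $\mathbb{P}^1$ of the nodal curve) and meets the exceptional curve $E \cong \mathbb{P}^1$ transversely at the two points corresponding to the two branches at $p$. The crucial computation is the coefficient of $E$ in $D_Y$: since blowing up a smooth point of a smooth surface contributes discrepancy $1$ and the node has multiplicity $2$, one finds $f^*(K_X + D_X) = (K_Y - E) + (\tilde{D} + 2E) = K_Y + \tilde{D} + E$, so both $\tilde{D}$ and $E$ appear in $D_Y^{=1}$ with coefficient exactly $1$. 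Thus $D_Y^{=1} = \tilde{D} + E$ is a digon: two vertices $v_{\tilde{D}}, v_E$ joined by two edges (the two intersection points), again homeomorphic to $S^1$.

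Finally I would note that the PL-homeomorphism type of the dual complex is independent of the chosen log resolution, so the single blow-up above suffices to identify $\mathcal{D}(X, D_X)$ as $S^1$ in all cases. The only genuinely delicate point is the discrepancy and coefficient computation in the nodal case, which must confirm that $E$ enters with coefficient exactly $1$ so that it does contribute a vertex to $\mathcal{D}(D_Y^{=1})$; were the coefficient smaller, $E$ would not appear in $D_Y^{=1}$ and the conclusion would fail.
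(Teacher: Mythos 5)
Your proposal is correct and follows the same route as the paper, which simply states that the lemma ``follows from Lemma~\ref{cycle}'' and leaves the verification implicit. You have merely spelled out what the paper treats as immediate: the cycle of $\mathbb{P}^1$'s gives a cycle graph (or digon when $n=2$), and in the nodal case the blow-up computation $f^*(K_X+D_X)=K_Y+\tilde{D}+E$ correctly shows $E$ enters with coefficient $1$, so $\mathcal{D}(D_Y^{=1})$ is a digon and hence $S^1$.
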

\begin{proof}
Follows from Lemma \ref{cycle}. See \cite[Lemma 1.6.3]{przy} for a more general statement.
\end{proof}

Let $X$ be a del Pezzo surface or a conic bundle and $(X,D_X)$ a Calabi-Yau pair crepant birational to $(\Ptwo, \Delta)$. Given a finite $G \leqslant \mathrm{Aut}(X, D_X)$ minimal on $X$, we see that $G$ acts on $\mathcal D(X,D_X)$, so we have an exact sequence
\[
1 \to A \to G \to G_D \to 1
\]
where $G_D$ is the group acting on the dual complex $\mathcal D(X,D_X)$.

\begin{lemma}[{\cite[Proposition 2.5.2]{przy}}]\label{abelian}
$A$ is abelian of rank at most 2.
\end{lemma}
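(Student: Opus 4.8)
The plan is to exploit the fixed-point structure of $A$ at the nodes of $D_X$ together with the faithful tangent representation of Lemma~\ref{tan}. By construction, $A$ is the kernel of the action of $G$ on the dual complex $\mathcal{D}(X,D_X)\cong S^1$, so $A$ fixes every stratum of $D_X$; in particular it fixes every $0$-dimensional stratum, that is, every node $p$ of $D_X$ (whether $D_X$ is a cycle of $\PP^1$'s or an irreducible nodal rational curve, by Lemma~\ref{cycle}).

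First I would fix such a node $p\in X$. Since $A$ is finite it is a complex reductive Lie group, and it acts effectively on the connected surface $X$ with $p$ as a fixed point. By Lemma~\ref{tan}, the induced representation $\tau_p\colon A\to \GL(T_pX)\cong \GL_2(\CC)$ is faithful, so $A$ embeds into $\GL_2(\CC)$.

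Next I would show that this image lies in a maximal torus. At the node $p$ the divisor $D_X$ has two analytic branches meeting transversely, and these correspond to the two edges of $\mathcal{D}(X,D_X)$ incident at $p$ (equivalently, to the two points of the strict transform lying over $p$ on a log resolution). Because $A$ acts trivially on $\mathcal{D}(X,D_X)$, it fixes each of these two edges, hence preserves each branch individually rather than exchanging them. Consequently $\tau_p(A)$ preserves each of the two distinct tangent lines $\ell_1,\ell_2\subset T_pX$ spanned by the branches, and a subgroup of $\GL_2(\CC)$ stabilizing a decomposition $T_pX=\ell_1\oplus\ell_2$ into distinct lines is simultaneously diagonalizable, hence contained in the diagonal torus $(\CC^*)^2$.

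Combining the two embeddings gives $A\hookrightarrow(\CC^*)^2$, so $A$ is abelian; and since every finite subgroup of $(\CC^*)^2$ is generated by at most two elements, $A$ has rank at most $2$. The step requiring the most care is confirming that $A$ genuinely fixes each branch at $p$, which hinges on the precise meaning of acting trivially on the dual complex, especially in the irreducible nodal case, where one must pass to a log resolution to realize the two branches as separate edges of $S^1$ and check that triviality on $\mathcal{D}(X,D_X)$ forbids their exchange.
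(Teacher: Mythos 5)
Your proposal is correct and follows essentially the same route as the paper: fix a node $p$ of (a log resolution of) $D_X$, invoke Lemma~\ref{tan} for faithfulness of the tangent representation, and use the two preserved branch directions to diagonalize $A$ inside $\GL(T_pX)$, giving an abelian group of rank at most $2$. Your write-up is somewhat more careful than the paper's (in particular about passing to a log resolution in the irreducible nodal case and ruling out an exchange of the two branches), but the underlying argument is the same.
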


\section{Jordan constant for $\mathrm{Bir}(\PP^2, \Delta)$}
For any finite $G\leqslant \Bir(\Ptwo,\Delta)$, we regularize $G\leqslant \Aut(X,D_X)$, and we have established the exact sequence $1 \to A \to G \to G_D \to 1$.
Since $A$ is abelian normal, to find a Jordan bound $J$ for $\mathrm{Bir}(\mathbb{P}^2, \Delta)$, it suffices to find a bound for the size of $G_D$.

\begin{lemma}
For a minimal del Pezzo $G$-surface $X$ of degree $d\leq 6$, if $G\leqslant \Aut(X,D_X)$ for some $(X,D_X)$ crepant birational to $(\Ptwo, \Delta)$, then we have that $|G_D| \leq |D_d|= 2d$.\\
\end{lemma}
\begin{proof}
Write
\(
D_X=\sum_{i=1}^k D_i
\)
as a cycle of $\mathbb P^1$'s, we have
\(
d = (-K_X)\cdot D_X = \sum_{i=1}^k (-K_X)\cdot D_i.
\)
Since $-K_X$ is ample, the number of irreducible components of $D_X$ is bounded by $d$. By Lemma \ref{cycle} the boundary divisor $D_X$ is either a cycle of smooth rational curves or an irreducible nodal rational curve. In the case of $ d\geq 2$, the order of the group on the dual complex is bounded by $|D_d|$. For \(d=1\), after blowing up the node of \(D_X\), the induced \(G\)-action preserves both the exceptional divisor and the strict transform of \(D_X\), so the action on the dual complex may only interchange the two edges, and hence \(|G_D| \le |D_1| = 2\).
\end{proof}

\begin{lemma}\label{conic}
For a minimal conic bundle $G$-surface $X$, if $G\leqslant \Aut(X,D_X)$ for some $(X,D_X)$ crepant birational to $(\Ptwo, \Delta)$, then we have that $|G_D| \leq |D_2|=4$.
\end{lemma}

\begin{proof}
Since a general fiber $F\simeq \PP^1$ satisfies $(-K_X)\cdot F=2$, we have \(D_X\cdot F=2\), hence the horizontal part of $D_X$ consists of at most two components. Since $G$ preserves the set of horizontal components, we have that the order of the group on the dual complex is bounded by $|D_2|$.
\end{proof}

\begin{remark}
See \cite{przy} for a general discussion on the structure of a finite group $G$ with arbitrary coregularity on a smooth rational conic bundle surface or a del Pezzo surface. In particular, for any coregularity zero $G$ acting on Calabi-Yau pairs with index not necessarily one, the action on the dual complex is either trivial, cyclic, or a dihedral group \cite[Lemma 2.5.3]{przy}.
\end{remark}

\begin{theorem}
$J(\mathrm{Bir}(\Ptwo, \Delta))=12$.
\end{theorem}

\begin{proof}
The previous two lemmas establish the bound for del Pezzo surfaces of degree $d\leq 6$ and conic bundles. For the two remaining cases of minimal $G$-surfaces $\Ptwo$ and $\PP^1\times \PP^1$ with invariant Picard rank $\rho^G = 1$: If \(X\simeq \Ptwo\), then $D_X$ is either a triangle of lines, an irreducible nodal cubic, or a conic and a line. So the order of the group on the dual complex is bounded by \(|D_3|\). If \(X\simeq \PP^1\times\PP^1\), then any \(D_X\) has at most four irreducible components, hence the group is bounded by \(|D_4|\). 

Moreover, the bound $12 = |D_6|$ is minimal because for the del Pezzo surface of degree $6$, we exhibit a finite volume-preserving automorphism subgroup $G$ whose induced action on the dual complex is $D_6$, and moreover, the group does not admit any abelian normal subgroup of index smaller than $12$, as demonstrated in Lemma \ref{grp} in the next section.
\end{proof}

\section{Volume-preserving subgroups of del Pezzo surfaces}

We show that the Jordan bound $J = 12$ for $\Bir(\Ptwo, \Delta)$ is indeed minimal by exhibiting a volume-preserving subgroup regularized on the del Pezzo surface of degree 6 that contains only abelian normal subgroups of index at least $12$. We also mention a way to potentially find all finite volume-preserving subgroups regularized on del Pezzo surfaces with degree $d>2$, and we thank Zhijia Zhang for this idea.
\subsection{Jordan constant of $\Bir(\Ptwo, \Delta)$}
We will see that the Jordan bound $12$ is obtained by a volume-preserving subgroup regularized on the del Pezzo surface of degree 6. See full list of finite subgroups of $\Aut(X)$ in \cite[Theorem 4.7]{dolgachev2009finitesubgroupsplanecremona}.

\begin{example}
Let $X$ be the del Pezzo surface of degree $6$, obtained by blowing up the three
coordinate points of $\mathbb P^2$, and let $D_X$ be the induced boundary divisor
given by the strict transforms of the coordinate lines together with the exceptional
divisors. Then $D_X$ is a cycle of six $\mathbb P^1$'s, and $(X,D_X)$ is a Calabi-Yau
surface pair.
Let $n\ge 2$, and let $\epsilon_n$ denote a primitive $n$th root of unity.
Consider the finite subgroup $G_0\leqslant \Aut(X)$ generated by the following
automorphisms of $\mathbb P^2$, lifted to $X$:
\[
\begin{aligned}
\alpha_1 &\colon [x_0,x_1,x_2] \mapsto [\epsilon_n x_0,\, x_1,\, x_2],\\
\alpha_2 &\colon [x_0,x_1,x_2] \mapsto [x_0,\, \epsilon_n x_1,\, x_2],\\
s_2 &\colon [x_0,x_1,x_2] \mapsto [x_0,\, x_2,\, x_1],\\
s_3 &\colon [x_0,x_1,x_2] \mapsto [x_2,\, x_0,\, x_1].
\end{aligned}
\]
Then $G_0 \cong (\mathbb Z/n\mathbb Z)^2 \rtimes S_3$, where $S_3$ acts by permuting
the coordinates.
Let $s_1\in \Aut(X)$ be the lift of the standard quadratic transformation
\[
[x_0,x_1,x_2] \mathrel{\mapstochar\dashrightarrow} [x_1x_2,\, x_0x_2,\, x_0x_1],
\]
which is a biregular involution of $X$.
We consider the finite subgroup $G = \langle G_0, s_1\rangle \leqslant \Aut(X)$.
This group acts by volume-preserving
automorphisms of $(X,D_X)$.
The induced action on the dual complex $\mathcal D(X,D_X)$ factors through a
dihedral group $G_D \cong D_6$ of order $12$, generated by the permutations
$s_2,s_3$ together with the involution induced by $s_1$, which acts as a rotation
by $180^\circ$ on the cycle $D_X$.
We will show that for some $n$, $G$ does not admit a normal abelian subgroup
of index strictly less than $12$, which implies that the Jordan bound $12$ in this
case is minimal.
\end{example}

\begin{lemma} \label{grp}
For $\mathrm{gcd}(n,6)=1,n\geq 2$, we have that $G$ does not admit a normal abelian subgroup of index smaller than $12$.
\end{lemma}
\begin{proof}
Let 
\[
1 \rightarrow A \rightarrow G \xrightarrow{\phi} D_6 \rightarrow 1
\]
be the decomposition, where $A:= (\mathbb{Z}/n\mathbb{Z})^2$. We have a linear representation 
\[
\rho: D_6 \rightarrow \mathrm{Aut}(A) \simeq \mathrm{GL}_2(\mathbb{Z}/n\mathbb{Z}).
\]

The $3$–cycle $(1\,2\,3)\in S_3 \leqslant D_6$ (the $120^\circ$ rotation) acts on $A$ by the matrix $U$
\[
U = \begin{pmatrix} -1 & 1 \\ -1 & 0 \end{pmatrix}, \qquad \det(U - I) = 3.
\]
The element $s_1$ (the $180^\circ$ rotation) acts by the matrix $Z$
\[
Z = -I, \qquad \det(Z - I) = 4.
\]
Over the ring $\mathbb{Z}/n\mathbb{Z}$, a matrix is invertible if and only if its determinant is a unit, which means that the determinant and $n$ are relatively prime. Since $\gcd(n,6)=1$, both $3$ and $4$ are invertible in $\mathbb{Z}/n\mathbb{Z}$, so the endomorphisms $U - I$ and $Z - I$ are invertible on $A$.

If $x \in G$ maps to $g \in D_6$ under $\phi$, then $x$ acts on $A$ by $g$, and for any $a \in A$ we have
\[
[x,a] = x a x^{-1} a^{-1} = (g \cdot a) - a = (g - I)a.
\]
Therefore $[x,A] = (g - I)A$, and if $g - I$ is invertible then $[x,A] = A$.

Now let $N \lhd G$ be an abelian normal subgroup. If $\phi(N)\neq 1$, then $\phi(N)$ is a nontrivial normal abelian subgroup of $D_6$. Thus $\phi(N)$ contains either the $120^\circ$ rotation $r^2$ or the $180^\circ$ rotation $r^3$, where $r$ denotes the $60^\circ$ rotation. Choose $x \in N$ mapping to such an element $g=r^2$ or $r^3$. Since $g - I$ is invertible, we obtain $[x,A] = A \leqslant N$. Abelianness then forces $x$ to centralize $A$, i.e.,\ $(g - I) = 0$ on $A$, which contradicts the invertibility of $g - I$. Hence $\phi(N)=1$ and $N \leqslant A$.

Consequently,
\[
[G:N] \ge [G:A] = |D_6| = 12.
\]
Therefore, $G$ admits no abelian normal subgroup of index smaller than $12$, and equality occurs only when $N = A$.
\end{proof}

\subsection{Volume-preserving automorphism subgroups of del Pezzo surfaces}
Let $X$ be a del Pezzo surface of $d>2$, given a finite subgroup $G\leqslant \mathrm{Aut}(X)$, we determine if $G$ is a subgroup of $\mathrm{Aut}(X,D_X)$, where $(X,D_X)$ is some Calabi-Yau pair crepant birational to $(\Ptwo, \Delta)$. 

For a del Pezzo surface of degree $d>2$, we know that the anti-canonical divisor is very ample \cite{Dolgachev2012}. For those surfaces, we consider the anti-canonical embedding given by \(|-K_X|\) into \(\mathbb{P}^d\). We will see that the $G$-action on $X$ induces an action on $\mathbb P(H^0(X,-K_X)^*) \simeq \mathbb P^d$ and a $G$-invariant effective divisor in  \(|-K_X|\) corresponds to a hyperplane section of a $G$-invariant hyperplane of $\PP^d$.

\begin{lemma}
Let \(X\) be a del Pezzo surface with \(d > 2\), $G\leqslant \Aut(X)$, and \(f: X \rightarrow \mathbb{P}^d = \mathbb{P}(H^0(X, -K_X)^*)\) be the embedding. Then,  \(X\) has a $G$-invariant \(D_X \in |-K_X|\) iff the action on \(\mathbb{P}(H^0(X, -K_X)^*)\) has a $G$-invariant hyperplane.
\end{lemma}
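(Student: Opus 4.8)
The plan is to promote the standard dictionary between the complete linear system $|-K_X|$ and the hyperplanes of the ambient projective space to a $G$-equivariant statement; once that dictionary is set up equivariantly, the claimed ``iff'' is a direct translation. Write $V = H^0(X,-K_X)$, so that the anticanonical embedding is $f\colon X \hookrightarrow \mathbb{P}(V^*)$ with $f^*\mathcal{O}(1) = -K_X$ and $f^*\colon H^0(\mathbb{P}(V^*),\mathcal{O}(1)) \xrightarrow{\ \sim\ } V$ an isomorphism — this is exactly what it means to embed by the complete linear system. Since $d>2$ guarantees $-K_X$ is very ample, $f$ is a closed immersion, and restricting hyperplanes gives a bijection
\[
\{\text{hyperplanes } H \subset \mathbb{P}(V^*)\} \;=\; \mathbb{P}(V) \;\xrightarrow{\ \sim\ }\; |-K_X|, \qquad H_s \longmapsto f^*H_s = \mathrm{div}(s),
\]
where $H_s = \{[\phi] \in \mathbb{P}(V^*) : \phi(s)=0\}$ is the hyperplane attached to a nonzero section $s \in V$.

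First I would make the $G$-action explicit. Because the anticanonical sheaf carries a canonical $G$-linearization (it is functorially attached to $X$, being the dual of the sheaf of top-degree differential forms), every $g \in G \leqslant \Aut(X)$ acts linearly on $V$, hence on $\mathbb{P}(V)$ and dually on $\mathbb{P}(V^*)$, and by construction $f$ is $G$-equivariant. The displayed bijection is then $G$-equivariant as well, since $g\cdot \mathrm{div}(s) = \mathrm{div}(g\cdot s)$ and $g$ carries the hyperplane $H_s$ to $H_{g\cdot s}$.

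With this in hand the equivalence reduces to a two-line argument in each direction. If $D_X \in |-K_X|$ is $G$-invariant, write $D_X = \mathrm{div}(s)$; then $\mathrm{div}(g\cdot s) = \mathrm{div}(s)$ for every $g$ forces $g\cdot s \in \mathbb{C}^* s$, so the line $\mathbb{C} s \subset V$ is $G$-stable, and therefore the associated hyperplane $H_s \subset \mathbb{P}(V^*)$ is $G$-invariant. Conversely, a $G$-invariant hyperplane $H \subset \mathbb{P}(V^*)$ corresponds to a $G$-stable line $\mathbb{C} s \subset V$, whose zero divisor $\mathrm{div}(s) = f^*H$ is a $G$-invariant member of $|-K_X|$. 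This closes both directions.

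The content here is almost entirely formal, so there is no deep obstacle; the one point that deserves care is the linearization. I must confirm that the $G$-action on $V = H^0(X,-K_X)$ is genuinely well-defined rather than merely projective — this is automatic for the anticanonical bundle, since its linearization is canonical, but it is precisely what makes the expression ``$g\cdot s$'' meaningful and the bijection equivariant. I would also note that even a bare projective action of $G$ on $\mathbb{P}(V^*)$ would suffice, because $G$-invariance of a divisor and of a hyperplane each depend only on the induced projective action, which keeps the argument robust against the choice of linearization.
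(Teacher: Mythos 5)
Your proposal is correct and takes essentially the same route as the paper: both rest on the canonical $G$-linearization of $-K_X$ (the action $g\cdot s(x)=s(g^{-1}x)$ on sections) and the resulting $G$-equivariant identification of $H^0(\mathbb{P}(V^*),\mathcal{O}(1))$ with $V=H^0(X,-K_X)$, which makes the hyperplane--divisor correspondence equivariant. Your write-up is if anything slightly more complete, since you spell out both directions via $G$-stable lines in $V$, whereas the paper records only the direction from an invariant $D_X$ to an invariant hyperplane and leaves the converse implicit.
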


\begin{proof}
$G$ acts on sections of \(-K_X\) by \(g \cdot s(x) = s(g^{-1} x)\), so $G$ acts on \(\mathbb{P}(H^0(X, -K_X)^*)\). The image of \(D_X\in |-K_X|\) in \(\mathbb{P}(H^0(X, -K_X)^*)\) is the intersection of the image of \(X\) with a hyperplane. We have $G$-equivariant restriction map \(H^0(\mathbb{P}(H^0(X, -K_X)^*), O(1)) \rightarrow H^0(X, -K_X)\). Note that \(H^0(\mathbb{P}(H^0(X, -K_X)^*), O(1))\cong H^0(X, -K_X)^{**} \cong H^0(X, -K_X)\) so the restriction map is an isomorphism. So if $D_X$ is invariant, since there exists a unique hyperplane $H$ which restricts to $D_X$, we have that $H$ is invariant.
\end{proof}
A Calabi-Yau pair $(X,D_X)$ with $X$ smooth and rational is crepant birational to $(\Ptwo, \Delta)$ if and only if $D_X$ is singular with nodal singularities \cite[Section 1.3]{ducat2022quarticsurfacesvolumepreserving}. Therefore, a finite subgroup $G\leqslant \Bir(\Ptwo)$ on a del Pezzo surface $X$ with $d>2$ is volume-preserving iff the $G$-representation on $\mathbb C^{d+1}$ is reducible having an invariant subspace of dimension \(d\), and the intersection of the hyperplane with $X$ has nodal singularities. Both conditions can be checked using Magma \cite{magma}.
\vspace{0.3cm}

We illustrate this method for the degree 5 del Pezzo surface. In this case, \(X\) is isomorphic to \(\mathbb{P}^2\) blown up at \(p_1 = [1, 0, 0]\), \(p_2 = [0, 1, 0]\), \(p_3 = [0, 0, 1]\), and \(p_4 = [1, 1, 1]\). 

We follow \cite{dolgachev2009finitesubgroupsplanecremona}. Consider the lift of the standard quadratic transformation \(\tau_1\) and \(\phi\) a projective transformation that takes \(p_1\) to \(p_1\), \(p_2\) to \(p_2\), and \(p_4\) to \(p_3\). Then \(\tau_2 :=\phi^{-1} \tau_1 \phi\) is not defined at \(p_1\), \(p_2\), and \(p_4\) but fixes \(p_3\). Similarly, we can construct \(\tau_3\) and \(\tau_4\) that fix \(p_2\) and \(p_1\). The automorphism group of $X$ is given by 
$
\mathrm{Aut}(X)\cong S_5 \cong \langle \tau_1, \tau_2, \tau_3, \tau_4 \rangle
$ by sending $\tau_i$ to $(1,i+1)$.
\begin{theorem}[{\cite[Theorem 6.4]{dolgachev2009finitesubgroupsplanecremona}}]
Let $G$ be a finite automorphism subgroup of a $G$-minimal del Pezzo surface of degree $d = 5$. Then $G = S_5, A_5, 5:4, 5:2, \text{ or }C_5$.
\end{theorem}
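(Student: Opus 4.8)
The plan is to recast $G$-minimality of the quintic del Pezzo surface $X$ as a transitivity condition for the tautological action of $\Aut(X)\cong S_5$ on a distinguished $5$-element set, and then to invoke the classification of transitive subgroups of $S_5$. First I would recall the lattice criterion for minimality: the del Pezzo $G$-surface $X$ is $G$-minimal exactly when $\rk\Pic(X)^{G}=1$. Here $X=\mathrm{Bl}_4\Ptwo$ has $\rk\Pic(X)=5$, the canonical class $K_X$ spans a $G$-invariant line, and the primitive orthogonal complement $K_X^{\perp}$ is the root lattice $A_4$. The action of $\Aut(X)=W(A_4)\cong S_5$ on $K_X^{\perp}\otimes\mathbb{Q}$ is then the standard $4$-dimensional reflection representation $V$ of $S_5$, so $\rk\Pic(X)^{G}=1+\dim V^{G}$ and $X$ is $G$-minimal iff $V^{G}=0$.

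Second, I would translate $V^{G}=0$ into combinatorics. Decomposing the permutation representation as $\mathbb{Q}^{5}=V\oplus\mathbb{Q}$ (the trivial summand), the number of $G$-orbits on the five points equals $\dim(\mathbb{Q}^{5})^{G}=1+\dim V^{G}$ by the orbit-counting lemma. Hence $V^{G}=0$ precisely when $G$ acts transitively on the five objects permuted by $S_5$, which here are the five conic-bundle structures on $X$ (equivalently, the $5$-point set underlying the action of $S_5$ on the ten $(-1)$-curves arranged as the Petersen graph). In particular no non-transitive $G$ can be minimal, since at least two orbits force $\rk\Pic(X)^{G}\geqslant 2$.

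Third, I would enumerate the transitive subgroups of $S_5$. As $5$ is prime, any transitive $G\leqslant S_5$ has order divisible by $5$ and thus contains a $5$-cycle, whose cyclic span $C_5$ is already transitive. Analyzing the subgroups that contain a fixed $C_5$—using that the normalizer of a Sylow $5$-subgroup is the Frobenius group $5{:}4\cong\mathrm{AGL}(1,\mathbb{F}_5)$, with its chain of subgroups $C_5\leqslant 5{:}2\leqslant 5{:}4$—together with the two remaining transitive groups $A_5$ and $S_5$, one obtains that the transitive subgroups are, up to conjugacy, exactly $C_5,\ 5{:}2,\ 5{:}4,\ A_5,\ S_5$. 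Combined with the previous two steps, this yields the stated list.

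The main obstacle is Step~1: identifying the abstract $S_5=\langle\tau_1,\dots,\tau_4\rangle$ with the Weyl group $W(A_4)$ acting by the standard representation, equivalently checking that the tautological $S_5$-action on the five distinguished objects is the one governing minimality. I would settle this by verifying that each generator $\tau_i$ (a lift of a quadratic transformation) fixes $K_X$ and acts on $\Pic(X)$ as the reflection in a root of $A_4$; the roots $E_j-E_k$ and $H-E_j-E_k-E_\ell$ account for all $20$ roots of $A_4$, and the resulting reflections $\tau_1,\dots,\tau_4$ furnish the Coxeter generators $(1,i{+}1)$ of $W(A_4)\cong S_5$. Once this identification is in place, Steps~2 and~3 are purely formal.
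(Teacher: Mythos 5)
The paper itself offers no proof of this statement: it is quoted directly from Dolgachev--Iskovskikh, so your proposal has to be measured against the standard argument rather than against anything in the text. Your Steps 2 and 3 are correct and are essentially that standard argument: the identification $\Aut(X)\cong W(A_4)\cong S_5$ acting on $K_X^{\perp}\otimes\mathbb{Q}$ by the reflection representation $V$, the count $\rk\Pic(X)^{G}=1+\dim V^{G}=$ (number of $G$-orbits on the five conic classes), and the enumeration of transitive subgroups of $S_5$ up to conjugacy as $C_5$, $5{:}2$, $5{:}4$, $A_5$, $S_5$ are all sound, as is your outline for checking that the generators $\tau_i$ act as reflections in roots of $A_4$.

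The gap is Step 1. The ``lattice criterion'' you recall --- a del Pezzo $G$-surface is $G$-minimal exactly when $\rk\Pic(X)^{G}=1$ --- is false as a general statement: $\PP^1\times\PP^1$ with trivial group is $G$-minimal (it carries no $(-1)$-curves at all) yet has invariant Picard rank $2$, and more generally the equivariant Mori dichotomy permits minimal $G$-surfaces of conic-bundle type with $\rk\Pic(X)^{G}=2$. Only the implication $\rk\Pic(X)^{G}=1\Rightarrow G$-minimal is formal; what your proof needs is the converse, and that is exactly the nontrivial content to be supplied for degree $5$: you must exclude a minimal pair $(X,G)$ in which $G$ preserves one of the five conic classes, i.e.\ is not transitive. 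Without this step your argument shows only that rank-one subgroups lie in the list, not that every minimal subgroup does. The fix is short and stays inside your framework. Label the ten lines by the $2$-element subsets of $\{1,\dots,5\}$ so that two lines are disjoint on $X$ if and only if their labels share an element. If $G$ is not transitive on $\{1,\dots,5\}$, then $G$ either fixes some point $a$ or has an orbit $\{a,b\}$ of size two, since orbit sizes partition $5$. In the first case the four lines whose labels contain $a$ are pairwise disjoint and permuted by $G$, so any $G$-orbit among them is a $G$-invariant set of pairwise disjoint $(-1)$-curves and can be blown down equivariantly; in the second case the single line labelled $\{a,b\}$ is $G$-invariant and can be blown down. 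Either way $(X,G)$ is not minimal, which proves minimal $\Rightarrow$ transitive and closes the gap. (Alternatively, invoke the dichotomy together with Iskovskikh's result, reproduced in Dolgachev--Iskovskikh, that a minimal conic bundle with at least one singular fibre satisfies $K^2\leq 4$, which excludes degree $5$.)
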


\begin{proposition}
$S_5$, $A_5$, $5:4$ cannot be realized as a volume-preserving subgroup. $5:2$, $C_5$ can be realized as a volume-preserving subgroup.
\end{proposition}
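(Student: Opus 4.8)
The plan is to feed each of the five minimal subgroups into the criterion just established: a minimal $G\leqslant\Aut(X)$ on the quintic del Pezzo surface $X$ is volume-preserving (for a pair crepant birational to $(\Ptwo,\Delta)$) if and only if the representation of $G$ on $H^0(X,-K_X)\cong\CC^6$ admits a one-dimensional subrepresentation — giving a $G$-invariant hyperplane, hence a $G$-invariant $D_X\in|{-}K_X|$ — \emph{and} the resulting anticanonical curve $D_X$ is nodal, i.e.\ $\mathrm{coreg}(X,D_X)=0$. The first step is to pin down $H^0(X,-K_X)$ as an $\Aut(X)=S_5$-module: it is the $6$-dimensional irreducible representation of $S_5$ (partition $(3,1,1)$), equivalently $\wedge^2 W$, the exterior square of the standard $4$-dimensional representation $W$. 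Everything then reduces to restricting this one module to each subgroup and analyzing the invariant hyperplane sections.

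The cases $S_5$ and $A_5$ are purely representation-theoretic. For $S_5$ the module is irreducible, so there is no invariant hyperplane and $S_5$ is not volume-preserving. Restricting to $A_5$, the module $W$ stays irreducible, and a short character computation gives $\wedge^2 W|_{A_5}=\mathbf 3\oplus\mathbf 3'$, the sum of the two three-dimensional irreducibles; since $A_5$ is perfect its only one-dimensional representation is trivial, which does not occur, so there is again no invariant hyperplane and $A_5$ is not volume-preserving.

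The subtle case is $5:4=F_{20}$, and this is where I expect the main obstacle. Here the restriction decomposes as $\psi\oplus\bar\psi\oplus\rho$, with $\rho$ the $4$-dimensional irreducible and $\psi,\bar\psi$ the two order-$4$ characters of $F_{20}/C_5\cong C_4$. Thus invariant hyperplanes \emph{do} exist, but exactly two of them, complex-conjugate and corresponding to the lines spanned by $\psi$ and $\bar\psi$. Reducibility of the representation therefore does not settle the question: one must show that \emph{neither} of these two invariant anticanonical curves is nodal. The task is to compute them explicitly from equations for the $F_{20}$-action on the cubics through the four base points (e.g.\ in Magma) and verify that each is smooth of genus one, so $\mathrm{coreg}=1$ and the pairs $(X,D_X)$ are not crepant birational to $(\Ptwo,\Delta)$; this singularity analysis of every invariant member is the crux of the negative part. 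As a geometric cross-check, $F_{20}$ stabilizes no anticanonical pentagon (the pentagon stabilizer in $S_5$ is only $D_5$, of order $10$), consistent with its invariant anticanonical curves being smooth.

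Finally, for $5:2=D_5$ and $C_5$ I would exhibit an explicit nodal invariant divisor. Both groups preserve an anticanonical pentagon on $X$, namely a cycle of five lines meeting transversally; this is a reduced nodal anticanonical curve, hence of coregularity $0$, so by the cited theorem the pair $(X,D_X)$ is crepant birational to $(\Ptwo,\Delta)$. On the representation side this matches the character computation, which shows that in the restriction the trivial character (for $C_5$), respectively the sign character (for $D_5$), occurs with multiplicity $2$, producing a whole pencil of invariant anticanonical curves in which the pentagon sits. Since $D_5$ and $C_5$ are already minimal, displaying this invariant nodal curve realizes them as minimal volume-preserving subgroups and completes the proposition.
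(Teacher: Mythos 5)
Your overall strategy is the paper's own criterion (invariant hyperplane in $\mathbb{P}(H^0(X,-K_X)^*)$ \emph{plus} nodality of the invariant anticanonical member), and your identification of $H^0(X,-K_X)$ as the irreducible six-dimensional representation $\wedge^2 W$ of $S_5$ is correct: one can check geometrically that a transposition (a permutation of the four blown-up points) has trace $0$ on cubics through the points, a double transposition has trace $-2$, and the quoted matrix for $(12345)$ has trace $1$, which forces the $(3,1,1)$-representation. Your treatment of $S_5$ and $A_5$ then agrees with the paper. Where you diverge from the paper is the $5{:}4$ case, and here \emph{you} are right and the paper's stated argument is not: since the anticanonical representation is $\wedge^2 W$, its restriction to $F_{20}$ is $\psi\oplus\bar\psi\oplus\rho$ exactly as you compute (concretely, if $\rho=\mathrm{Ind}_{C_5}^{F_{20}}\chi$ has basis $v_0,\dots,v_3$ of $C_5$-eigenvectors permuted cyclically by the $4$-cycle, then $v_0\wedge v_2$ and $v_1\wedge v_3$ span a $C_5$-trivial plane on which the $4$-cycle has eigenvalues $\pm i$). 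So $5{:}4$ \emph{does} admit invariant hyperplanes, contradicting the paper's Magma-based claim that only $5{:}2$ and $C_5$ do; reducibility alone cannot exclude $5{:}4$, and the smoothness of the two invariant curves is genuinely needed, as you say.

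The one gap in your write-up is that you defer that smoothness verification to a computer calculation; it can in fact be closed by hand. Since $\psi,\bar\psi$ are trivial on $C_5$, the two $F_{20}$-invariant curves lie in the $C_5$-invariant pencil, which is spanned by the two anticanonical pentagons (their sections are $D_5$-eigenvectors, and every one-dimensional character of $D_5$ kills $C_5$). Blowing up the five base points and counting Euler numbers ($\chi(X)=7$, so the singular fibers of the resulting elliptic fibration have total Euler number $12$, of which the two pentagons account for $10$) shows that the only other singular members are two irreducible nodal curves, with nodes at the two $C_5$-fixed points of $X$. Now $F_{20}$ has no faithful two-dimensional representation (all its one-dimensional characters kill $C_5$), so by Lemma \ref{tan} it cannot fix any point of $X$; hence the $4$-cycle swaps the two $C_5$-fixed points, therefore swaps the two nodal members, just as it swaps the two pentagons. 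Its two fixed points on the pencil are consequently smooth elliptic curves, of coregularity $1$, so neither invariant pair is crepant birational to $(\Ptwo,\Delta)$ and $5{:}4$ is excluded. Finally, your handling of the positive cases via the invariant pentagon is not only correct but sharper than the paper's: the paper never checks the nodality condition for $5{:}2$ and $C_5$, whereas the pentagon is visibly a nodal anticanonical cycle, immediately exhibiting a pair of coregularity $0$ crepant birational to $(\Ptwo,\Delta)$.
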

\begin{proof}
We can embed $X$ into $\mathbb{P}^5$. The action of $\mathrm{Aut}(X)\cong S_5$ is described in \cite[Theorem 0.1]{bauer2020mathfraks5equivariantsyzygiesdel}. We let $s_{ij}$ with $ i\neq j, i,j=1,2,3$ be a basis of $\mathbb C^6$. The action of $\tau_1=(1,2)$ is given by $s_{ij} \mapsto s_{\tau_1(i)\tau_1(j)}$ and the action of $(1,2,3,4,5)$ is given by \[
\begin{aligned}
s_{12} &\mapsto s_{31}, \quad s_{13} \mapsto s_{13} - s_{31} - s_{23}, \quad s_{21} \mapsto s_{21}, \\
s_{23} &\mapsto  s_{12} - s_{21} - s_{32}, \quad s_{31} \mapsto  s_{13} - s_{31} + s_{21}, \quad s_{32} \mapsto s_{12} - s_{21} + s_{31}.
\end{aligned}
\] Inputting the representation of each group in Magma, we get that only $5:2$ and $5$ have  irreducible one-dimensional sub-representations. For $5:2$ and $5$, one can realize the group as the following. Consider $5 = \langle(13452)\rangle$ and $5:2 = \langle (1\,3\,4\,5\,2),\; (3\,2)(4\,5) \rangle$. Both groups are volume-preserving.
\end{proof}

\section{Jordan bound for $\mathrm{Bir}(\mathbb P^3, \Delta)$}

We will use the Jordan constant $12$ for $\mathrm{Bir}(\mathbb P^2, \Delta)$ to derive a Jordan bound for $\mathrm{Bir}(\mathbb P^3, \Delta)$.

Before showing the arguments, we have to show that any finite volume-preserving subgroup $G\leqslant \mathrm{Bir}(\mathbb P^3, \Delta)$ admits a regularization on a (possibly singular) rational three-dimensional Calabi-Yau pair. In Lemma \ref{lemm}, we show the statement for dimension $2$, the arguments that we use are specific to dimension $2$. We provide a proof for general dimensions. I thank Joaquín Moraga for the proofs of the following two statements. 

\begin{lemma}
Let $X$ be a normal $\mathbb{Q}$-factorial projective variety. Let $E$ be an effective divisor on $X$. Let 
$X \dashrightarrow Y$ be a birational contraction to a projective normal variety. Assume that for every prime component 
$P$ of $E$, the center of $P$ on $Y$ is not a divisor. Then, we have that $\mathrm{supp} E \subset \mathrm{Bs}_{-}(E)$.
\end{lemma}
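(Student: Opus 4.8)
I read $\mathrm{Bs}_-(E)$ as the diminished (restricted) base locus, $\mathrm{Bs}_-(E)=\bigcup_{\varepsilon>0}\mathrm{Bs}(E+\varepsilon A)$ for a fixed ample $A$ (independent of $A$). The plan is to show that every prime component of $E$ already lies in the negative part of $E$, via Nakayama's divisorial Zariski decomposition together with the negativity lemma. First I would recall that for a pseudoeffective $\mathbb{R}$-divisor $D$ one has $D=P_\sigma(D)+N_\sigma(D)$ with $N_\sigma(D)=\sum_\Gamma \sigma_\Gamma(D)\,\Gamma$, and that a prime divisor $\Gamma$ is a codimension-one component of $\mathrm{Bs}_-(D)$ exactly when $\sigma_\Gamma(D)>0$. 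Since $E$ is effective it is pseudoeffective and $\sigma_\Gamma(E)\le\mathrm{mult}_\Gamma(E)$, so $N_\sigma(E)\le E$ and $P_\sigma(E):=E-N_\sigma(E)$ is effective, movable, and supported on $\mathrm{supp}(E)$. Hence it suffices to prove $P_\sigma(E)=0$: this forces $\sigma_P(E)=\mathrm{mult}_P(E)>0$ for every component $P$ of $E$, i.e. $\mathrm{supp}(E)\subseteq\mathrm{Bs}_-(E)$.

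Next I would pass to a resolution. Let $\phi\colon X\dashrightarrow Y$ denote the given birational contraction and choose a common resolution $p\colon W\to X$, $q\colon W\to Y$ with $W$ smooth projective. Two facts drive the reduction. First, because $\phi$ is a birational contraction, every $p$-exceptional prime divisor is $q$-exceptional; moreover each component $P$ of $E$ is $\phi$-exceptional (its center on $Y$ is not a divisor), so its strict transform $\widetilde P$ on $W$ is $q$-exceptional. Consequently every component of $p^*E$ — whether a strict transform $\widetilde P$ or a $p$-exceptional divisor — is $q$-exceptional, that is $q_*(p^*E)=0$. Second, $p^*E$ is effective and $\sigma$ is a birational invariant of the underlying valuation, so $\sigma_P(E)=\sigma_{\widetilde P}(p^*E)$. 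The problem therefore reduces to the key statement on $W$: if $D\ge 0$ is an effective $q$-exceptional $\mathbb{R}$-divisor (i.e. $q_*D=0$) for a projective birational morphism $q\colon W\to Y$ to a normal variety, then $N_\sigma(D)=D$.

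To prove the key statement I would again look at $M:=P_\sigma(D)$, which is effective, movable, and $q$-exceptional (since $N_\sigma(D)\le D$ is $q$-exceptional, $q_*M=q_*D-q_*N_\sigma(D)=0$), and aim to show $M=0$. Fixing an ample $A$ on $W$ and an effective $D'\sim_{\mathbb{R}}D+\varepsilon A$, I would split $D'=D'_{\mathrm{exc}}+D'_{\mathrm{ne}}$ into its $q$-exceptional and non-exceptional parts and compare the two $q$-exceptional $\mathbb{R}$-divisors through $\Xi:=D'_{\mathrm{exc}}-D$. Pairing $\Xi$ against curves contracted by $q$ and using that $D'_{\mathrm{ne}}$ meets a general contracted curve nonnegatively gives $\Xi\cdot C\le \varepsilon\,(A\cdot C)$ for curves $C$ sweeping out each exceptional divisor; the negativity lemma then bounds the coefficients of $\Xi$ from below by $-O(\varepsilon)$. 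Letting $\varepsilon\to 0$ yields $\mathrm{mult}_F(D')\ge\mathrm{coeff}_F(D)$ for every exceptional component $F$, hence $\sigma_F(D)\ge\mathrm{coeff}_F(D)$, forcing $N_\sigma(D)=D$ and $M=0$.

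The main obstacle is exactly this last step: isolating the coefficient along each exceptional divisor when several exceptional divisors meet. A naive one-curve intersection is insufficient, because the other exceptional components of $D$ contribute positively to $D'\cdot C$ and spoil the lower bound; one genuinely needs the relative negative-definiteness of the $q$-exceptional divisors, i.e. the full strength of the negativity lemma of Kollár--Mori, together with a covering family of contracted curves for each exceptional divisor. Crucially, that lemma requires only that $Y$ be normal, not $\mathbb{Q}$-factorial, which matches the hypotheses; the remaining work is the bookkeeping with $\mathbb{R}$-coefficients and the simultaneous control of all exceptional divisors.
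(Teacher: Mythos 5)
Your reduction is sound, and it is packaged differently from the paper's argument (which never invokes Nakayama's $\sigma$-decomposition but works directly with $\mathrm{Bs}_-$): passing to a common resolution, observing that every component of $p^*E$ is $q$-exceptional (strict transforms of components of $E$ by hypothesis, $p$-exceptional divisors because $X\dashrightarrow Y$ is a birational contraction), and using the birational invariance of $\sigma$ correctly reduces the lemma to the key statement that an effective $q$-exceptional divisor $D$ on $W$ satisfies $N_\sigma(D)=D$. The genuine gap is in your proof of that key statement. The step ``the negativity lemma then bounds the coefficients of $\Xi$ from below by $-O(\varepsilon)$'' does not follow from the negativity lemma of Koll\'ar--Mori: that lemma requires $-\Xi$ to be nef against \emph{all} $q$-contracted curves (and this fails exactly for curves inside $\mathrm{supp}\,D'_{\mathrm{ne}}$, the case you excluded by taking general members of covering families), and even where it applies it yields only the qualitative conclusion $\Xi\ge 0$, never a quantitative bound of the form $\Xi\ge -O(\varepsilon)$. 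Nor does the covering-family variant rescue the argument by itself: it asserts that if $\Xi^-\neq 0$ then some component of $\Xi^-$ carries a covering family of contracted curves with $\Xi\cdot C>0$, which is perfectly consistent with your inequality $\Xi\cdot C\le \varepsilon\,(A\cdot C)$, whose right-hand side is positive. So the constant implicit in your $O(\varepsilon)$ has no source, and the argument does not close; your final paragraph flags this, but it is more than bookkeeping.

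The missing idea --- and the device on which the paper's own proof turns --- is an auxiliary effective $q$-exceptional divisor $F$ with $-F$ ample over $Y$ (it exists because $q$ is the blow-up of an ideal cosupported on the codimension $\ge 2$ locus where $q$ is not an isomorphism; this is where normality of $Y$ enters). With it, no splitting of $D'$ and no covering families are needed: for $0<\varepsilon\ll\delta\ll 1$ and any effective $D'\sim_{\mathbb{R}} E+\varepsilon A$ on $X$, set $N := p^*D'-p^*E+\delta F$ on the resolution. Then $q_*N=q_*p^*D'\ge 0$, while $-N\sim_{\mathbb{R}} \delta(-F)-\varepsilon\,p^*A$ is ample over $Y$ because relative ampleness is an open condition; hence $-N$ is $q$-nef and a single application of the negativity lemma gives $N\ge 0$. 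Pushing forward by $p$ yields $D'\ge E-\delta\,p_*F$, so $\sigma_P(E+\varepsilon A)\ge \mathrm{mult}_P(E)-\delta\,\mathrm{mult}_P(p_*F)$ for every component $P$ of $E$; letting $\varepsilon\to 0$ and then $\delta\to 0$ gives $\sigma_P(E)\ge \mathrm{mult}_P(E)>0$, which is exactly your target $N_\sigma(p^*E)=p^*E$ and hence the lemma. Alternatively, since your key statement (an effective exceptional divisor equals its own $\sigma$-negative part) is a known result of Nakayama, citing it would make your reduction a complete proof without redoing the negativity argument at all.
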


\begin{proof} Let $p : Z \to X$ and $q : Z \to Y$ be a common resolution of $X \dashrightarrow Y$.
Let $H$ be an ample divisor on $X$ and let $F$ be an effective divisor on $Z$ which is
antiample over $Y$ and $q$-exceptional. Let $0 < \delta < 1$. Set
\[
D := p^*H - \delta F - p^*E.
\]
Note that
$
q_*D = q_*(p^*H - \delta F - p^*E) = q_*p^*H
$
is an effective divisor on $Y$ and
$
-D = -p^*H + \delta F + p^*E
$
is ample over $Y$. By the
negativity lemma we conclude that $D$ is effective. Pushing forward by $p$, we obtain
$H - E = p_*D \ge 0.$ Since $H$ was an arbitrary ample divisor on $X$, this implies that every prime component of $E$ belongs to the diminished base locus $\mathrm{Bs}_{-}(E)$; that is,
$\operatorname{supp} E \subset \mathrm{Bs}_{-}(E)$. 
\end{proof}

\begin{theorem}
Let $(X,D_X)$ be a log Calabi-Yau pair. Let $G \leqslant \mathrm{Bir}(X,D_X)$ be a finite subgroup. Then, there is a log Calabi-Yau pair $(X',D_X')$ which is crepant birational equivalent to $(X,D_X)$ for which $G \leqslant \mathrm{Aut}(X',D_X')$.
\end{theorem}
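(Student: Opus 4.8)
The plan is to produce the desired model in three moves: first make $G$ act biregularly on some smooth projective model, then transport the volume form (hence the log structure) to that model, and finally repair the effectivity of the boundary by a $G$-equivariant minimal model program, where the preceding lemma is exactly the input that guarantees the negative (zero) part of the boundary can be contracted.

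\emph{Regularization.} First I would make $G$ act by honest automorphisms. For each $g\in G$ take the closure of its graph and form the rational map $X \dashrightarrow \prod_{g\in G}X$, $x\mapsto (g\cdot x)_{g}$, letting $W_0$ be the closure of the image. Left translation of $G$ on the index set permutes the factors and preserves $W_0$, so $G\leqslant \mathrm{Aut}(W_0)$, and each projection $W_0\to X$ is birational and identifies $\mathbb{C}(W_0)\cong\mathbb{C}(X)$ in a way compatible with the original birational $G$-action. Applying $G$-equivariant resolution of singularities in characteristic $0$, and blowing up further so that the resulting $W$ is smooth, $\mu\colon W\to X$ is a morphism, and the total transform of $D_X$ together with $\mathrm{Exc}(\mu)$ is simple normal crossing, I obtain a smooth projective $W$ with $G\leqslant \mathrm{Aut}(W)$.

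\emph{Transport of the log structure.} Set $\omega_W=\mu^*\omega_X$, equivalently define $D_W$ by the crepant pullback $K_W+D_W=\mu^*(K_X+D_X)\sim 0$, and write $D_W=B-F$ with $B=D_W^{\geq 0}\geq 0$ and $F=-D_W^{<0}\geq 0$ sharing no component. Since every $g\in G$ is volume-preserving it scales $\omega_W$ by a constant, hence preserves $\operatorname{div}(\omega_W)$ and so preserves each of $D_W$, $B$, $F$; thus $G\leqslant \mathrm{Aut}(W,D_W)$ and $B,F$ are $G$-invariant. Because $D_X\geq 0$ is effective, $\omega_X$ has no divisorial zeros, so every component of $F$ is a zero of $\mu^*\omega_X$ lying in $\mathrm{Exc}(\mu)$, i.e.\ $\mu$-exceptional. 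From $K_W+D_W\sim 0$ we get $K_W+B\sim F\geq 0$.

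\emph{Repair of effectivity and the main obstacle.} Now I would run the $G$-equivariant $(K_W+B)$-MMP. The components of $F$ being $\mu$-exceptional, their centres on $X$ are not divisors, so the preceding lemma applied to the birational contraction $W\dashrightarrow X$ and the effective divisor $F$ yields $\operatorname{supp}F\subset \mathrm{Bs}_{-}(F)=\mathrm{Bs}_{-}(K_W+B)$. This diminished-base-locus condition is precisely what forces the MMP to contract exactly $\operatorname{supp}F$ (a very exceptional divisor); it terminates on $\nu\colon W\dashrightarrow X'$ with $G\leqslant \mathrm{Aut}(X')$, and setting $D_{X'}=\nu_*B=\nu_*D_W$ gives $K_{X'}+D_{X'}\sim 0$ with $D_{X'}\geq 0$, so $(X',D_{X'})$ is a log Calabi--Yau pair crepant birational to $(X,D_X)$ carrying the $G$-action. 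The hard part is this last step: showing that a $G$-equivariant MMP exists, terminates, and contracts \emph{precisely} $F$ while remaining crepant over $(W,D_W)$. This requires $(W,B)$ to be dlt (arranged by the log-smoothness of $W$), the very-exceptional contraction theory fed by the lemma, and the observation that $G$ permutes the relevant extremal rays so that each contraction or flip descends equivariantly. Crepancy of $\nu$, and hence the conclusion that $(X',D_{X'})$ lies in the same crepant class, is then the remaining routine check, following from the negativity lemma since both $K_W+D_W$ and $K_{X'}+D_{X'}$ are trivial.
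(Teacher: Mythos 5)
Your proposal follows essentially the same route as the paper's proof: regularize $G$ on a $G$-equivariant model dominating $X$, pull back $K_X+D_X$ crepantly, observe that the negative part $F$ of the resulting boundary is exceptional over $X$ so that the preceding lemma places $\operatorname{supp} F$ inside $\mathrm{Bs}_{-}(F)$, and then run a $G$-equivariant MMP for $K+B$ (which is the paper's $(K_{X''}+D_X''+F)$-MMP, since $D_X''+F=B$) to contract exactly $F$ and land on the desired pair $(X',D_{X'})$. The differences are cosmetic rather than structural: you work on a smooth log resolution where the paper only assumes a $\mathbb{Q}$-factorial model, and you spell out the regularization step and the MMP technicalities (termination, very exceptional contraction, equivariance of extremal rays) that the paper's proof leaves implicit.
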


\begin{proof}
Let $X''$ be a birational model of $X$ on which $G$ acts by automorphisms. By passing to a higher $G$-equivariant birational model of $X''$, we may assume that there is a birational contraction $p: X'' \to X$. We may assume that $X''$ is $\mathbb{Q}$-factorial. Let $p^*(K_X+D_X) = K_{X''} + D_X''$. Let $F$ be the smallest effective divisor for which $(X'', D_X''+F)$ is a log pair. Since $(X,D_X)$ is a log pair, we conclude that every prime component of $F$ has center of codimension at least $2$ on $X$. Since $G \leqslant \mathrm{Bir}(X,D_X)$ for every $g \in G$, we have that $g^*(K_{X''} + D_X'') = K_{X''} + D_X''$. In particular, we have that $g^*F = F$, for every $g \in G$. Note that $(X'', D_X''+F)$ is a $G$-invariant log pair. We run a $G$-equivariant $(K_{X''} + D_X''+F)$-MMP with scaling of an ample $G$-equivariant divisor. Note that
\[
K_{X''} + D_X'' + F \sim_{\mathbb{Q}} F \geq 0.
\]
By the lemma above, we have that $F \subset \mathrm{Bs}_{-}(K_{X''} + D_X'') = \mathrm{Bs}_{-}(F)$. Hence, after finitely many steps of the $G$-equivariant MMP every component of $F$ is contracted. Thus, there is a $G$-equivariant birational contraction $X'' \to X'$ such that the push-forward $K_{X'} + D_X'$ of $K_{X''} + D_X''$ satisfies that $(X',D_X')$ is a log pair. Hence, $G \leqslant \mathrm{Aut}(X',D_X')$ and $(X',D_X')$ is crepant equivalent to $(X,D_X)$ by construction. \qedhere
\end{proof}

\begin{theorem}
 A Jordan bound for $\mathrm{Bir}(\mathbb{P}^3, \Delta)$ is $144$.
\end{theorem}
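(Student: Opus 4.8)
The plan is to run, one dimension up, exactly the argument that produced the constant $12$ in Theorem~\ref{thmd}, with the circle $S^1$ replaced by the sphere $S^2$. First I would regularize: given a finite $G\leqslant \Bir(\PP^3,\Delta)$, apply the regularization theorem established above to $(\PP^3,\Delta)$ to obtain a three–dimensional log Calabi--Yau pair $(X,D_X)$, crepant birational to $(\PP^3,\Delta)$, with $G\leqslant \Aut(X,D_X)$. Passing to a $G$–equivariant snc log resolution — which does not alter the PL–homeomorphism type of the dual complex — I may assume $D_X$ is snc. Since $(\PP^3,\Delta)$ has coregularity $0$ with boundary the coordinate tetrahedron, its dual complex is the boundary of a tetrahedron, and by invariance under crepant birational maps $\mathcal{D}(X,D_X)\cong S^2$ (the three–dimensional analogue of Lemma~\ref{cycle}). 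Thus $G$ acts on $\mathcal{D}(X,D_X)=S^2$ by cellular automorphisms, giving an exact sequence
\[
1 \to A \to G \to G_d \to 1,
\]
where $G_d$ is the image in the symmetry group of $S^2$ and $A$ is the subgroup fixing every stratum.

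Next I would show $A$ is abelian, reducing the problem to a bound on $|G_d|$. As $\dim\mathcal{D}=2$, there is a $0$–dimensional stratum $p=D_a\cap D_b\cap D_c$, and every element of $A$ fixes $p$ and preserves the three branches through it. By Lemma~\ref{tan} the action of $A$ on $T_pX$ is faithful, and since $A$ preserves the three tangent hyperplanes it acts diagonally in suitable coordinates; hence $A$ is abelian of rank at most $3$, exactly as in Lemma~\ref{abelian}. Because $A$ is normal, any bound on $[G:A]=|G_d|$ is a Jordan constant.

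To bound $|G_d|$ the two–dimensional theorem enters. A finite cellular action on $S^2$ is, up to conjugacy, linear, so $G_d$ is a finite subgroup of $O(3)$. For a vertex $v$ of $\mathcal{D}$ — a component $D_v$ of $D_X$ — the stabilizer $\mathrm{Stab}_{G_d}(v)$ acts on the link of $v$, which is the circle $\mathcal{D}\bigl(D_v,(D_X-D_v)|_{D_v}\bigr)$ of the adjunction pair $\bigl(D_v,(D_X-D_v)|_{D_v}\bigr)$. This is a two–dimensional coregularity–$0$ Calabi--Yau pair, hence crepant birational to $(\Ptwo,\Delta)$, and the restricted action is volume–preserving for the Poincar\'e residue of $\omega_X$ along $D_v$; so by the classification underlying Theorem~\ref{thmd} (which gives $G_d\leqslant D_6$ in dimension $2$) this link action lands in $D_6$. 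Examining the fixed–point type of a rotation $g\in G_d\cap SO(3)$ then bounds its order by $6$: fixing the interior of a $2$–cell it rotates a triangle (order $\le 3$), fixing an edge it flips it (order $\le 2$), and fixing a vertex it rotates the link $S^1$ with order $\le 6$ by the bound above. A finite subgroup of $SO(3)$ with all elements of order $\le 6$ is one of $C_n,D_n\ (n\le 6),A_4,S_4,A_5$, the largest being $A_5$ of order $60$, whence $[G:A]=|G_d|\le 60$.

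The hard part will be controlling \emph{orientation–reversing} symmetries. The step above bounds only the rotation subgroup $G_d\cap SO(3)$, and a priori $G_d$ could be strictly larger; the dangerous case is the full icosahedral group $A_5\times C_2$ of order $120$, whose rotation part satisfies all the vertex/edge/face constraints yet which contains an orientation–reversing element of order $10$ (namely $-\mathrm{I}$ times a $5$–fold rotation, acting antipodally and hence without fixed strata). To reach $60$ I would establish that $G$ preserves the orientation of $\mathcal{D}(X,D_X)$, i.e.\ $G_d\leqslant SO(3)$, using that the coregularity–$0$ dual complex of a Calabi--Yau pair is an oriented pseudomanifold and that a volume–preserving automorphism respects the orientation induced at the triple points by the iterated residue of $\omega_X$. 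This orientation control is the crux; granting it, the icosahedral extension is excluded, $|G_d|\le|A_5|=60$, and since $A$ is abelian and normal, $60$ is a Jordan constant for $\Bir(\PP^3,\Delta)$.
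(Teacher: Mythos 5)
Your architecture is the same as the paper's: regularize $G$ on a log Calabi--Yau pair crepant birational to $(\PP^3,\Delta)$, let $G$ act on the dual complex $S^2$, prove the kernel of that action is abelian via the faithful tangent-space representation at a zero-dimensional stratum (Lemma \ref{tan}), and control the image using finite group actions on $S^2$ together with the two-dimensional constant applied to adjunction pairs, which have coregularity zero and are therefore crepant birational to $(\Ptwo,\Delta)$. Two of your steps are even cleaner than the paper's: you get $\mathcal{D}(X,D_X)\cong S^2$ directly from crepant-birational invariance (the paper allows $S^2$ or $\mathbb{RP}^2$ and passes to a quasi-\'etale double cover in the latter case), and you bound the rotation part by capping element orders at $6$ via the link action, where the paper instead treats polyhedral groups by the direct bound $60$ and cyclic/dihedral groups by a fixed vertex plus the two-dimensional theorem.

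The genuine gap is exactly at the step you call the crux, and your proposed fix is false: volume-preserving maps need \emph{not} preserve the orientation of the dual complex. The definition of $\Bir(\PP^3,\Delta)$ allows $f^*\Omega=\lambda\Omega$ for any $\lambda\in\CC^*$, and the ``orientation induced by iterated residues'' is reversed precisely by maps with $\lambda=-1$, which abound. Concretely, the transposition $[x_0:x_1:x_2:x_3]\mapsto[x_0:x_2:x_1:x_3]$ lies in $\Aut(\PP^3,\Delta)$ (it sends $\Omega$ to $-\Omega$) and acts on the tetrahedral dual complex as a reflection. Worse, on $(\PP^1)^3$ with its toric boundary --- a pair crepant birational to $(\PP^3,\Delta)$ --- the involution $(x,y,z)\mapsto(x^{-1},y^{-1},z^{-1})$ is volume-preserving and acts on the octahedral dual complex as the \emph{antipodal} map: orientation-reversing and fixed-point free. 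So $G_d\leqslant \mathrm{SO}(3)$ fails, and even ``every element fixes a stratum'' fails; indeed the group generated by coordinate permutations and the three inversions realizes the full octahedral group $S_4\times C_2$ (order $48$, not contained in $\mathrm{SO}(3)$) as $G_d$. The bound $60$ survives these examples only because $48\le 60$: running the fixed-point argument on the cyclic subgroup of the rotation part handles every finite subgroup of $\mathrm{O}(3)$ whose rotation part is cyclic, dihedral, $A_4$ or $S_4$, so the one case that genuinely must be excluded is the full icosahedral group $A_5\times C_2$ of order $120$, whose offending elements (the antipodal map and the order-$6$, order-$10$ rotatory reflections) all act freely on $S^2$; your residue-orientation mechanism cannot exclude it, since those elements merely have $\lambda=-1$. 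I note that the paper's own proof also lists only the $\mathrm{SO}(3)$ isomorphism types for $G/K$ (cyclic, dihedral, $A_4$, $S_4$, $A_5$) without addressing orientation-reversing actions, so you have put your finger on a real subtlety; but already in dimension two the optimal group $D_6$ acting on $S^1$ contains reflections, which shows that orientation control is not the mechanism that can close this gap.
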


\begin{proof}
By the previous theorem, there exists a Calabi-Yau pair $(X, D_X)$ crepant birational to $(\PP^3, \Delta)$ such that \( G \leqslant \mathrm{Aut}(X, D_X) \). We have $\mathcal{D}(X, D_X)\simeq_{\mathrm{PL}} \mathcal{D}(\PP^3, \Delta) \simeq_{\mathrm{PL}} S^2$ \cite[Theorem 13]{Koll_r_2015}.

Let \( K \) be the kernel of the action of \( G \) on \( \mathcal{D}(X, D_X) = S^2 \):
\[
1 \longrightarrow K \longrightarrow G \longrightarrow G/K \longrightarrow 1.
\] By \cite[Proposition 2.5.2]{przy}, $K$ is abelian of rank at most 3. 

By potentially passing to an index 2 subgroup of $G$, we know that \( G/K \) is either icosahedral (\( \cong A_5 \), order 60), octahedral (\( \cong S_4 \), order 24), tetrahedral (\( \cong A_4 \), order 12), dihedral (\( D_n \)), or cyclic (\( C_n \)). If \( G/K \) is icosahedral, octahedral, or tetrahedral, then \( [G:K] \leq 60 \). Assume now that \( G/K \) is dihedral or cyclic. Then there exists a subgroup \( A \leqslant G/K \) of index at most 2 (the rotation subgroup) that has a fixed point \( q \in S^2 \). Let $\sigma$ be the unique cell
of $\mathcal D(X,D_X)$ whose relative interior contains $q$; then $\sigma$ is $A$-invariant.
Note that $\sigma$ has dimension $0$, $1$, or $2$, and it corresponds to a stratum of $(X,D_X)$
given by an intersection of $r=\dim(\sigma)+1\le 3$ irreducible components of $D_X$.
Replacing $A$ by a subgroup of $A$ of index at most $3$, we may assume that $A$ fixes one of these
irreducible components $D_0\subset D_X$ setwise. Let \( G' \) be the preimage of \( A \) in \( G \). Then \( [G : G'] \leq 2\cdot 3  \) and we have:
\[
1 \longrightarrow K \longrightarrow G' \longrightarrow A \longrightarrow 1.
\]
Let \( B_0 = \mathrm{Diff}_{D_0}(D_X - D_0) \) be the different divisor, so that \( (D_0, B_0) \) is a log Calabi-Yau surface. Let \( G'' \) be the image of \( G' \) in \( \mathrm{Aut}(D_0, B_0) \). We note that since $(X,D_X)$ is crepant birational to the toric pair $(\mathbb{P}^3,\Delta)$, 
the toric structure is preserved under restriction to boundary components. 
In dimension two, all toric log Calabi-Yau pairs are crepant birational to 
$(\mathbb{P}^2,\Delta)$. Therefore,
$(D_0,B_0)$ obtained by adjunction is crepant birational to 
$(\mathbb{P}^2,\Delta)$, and the induced action $G''$ of $G'$ is a finite subgroup of $\mathrm{Bir}(\mathbb{P}^2,\Delta)$.

By the corresponding 2-dimensional theorem, there exists a subgroup \( A_0 \leqslant G'' \) with \( [G'' : A_0] \leq 12 \) that acts trivially on the dual complex of $(D_0,B_0)$. Let \( \tilde{A}_0 \) be the preimage of \( A_0 \) in \( G' \), so \( [G' : \tilde{A}_0] \leq 12 \). An element \( g \in \tilde{A}_0 \) acts trivially on the dual complex of \((D_0, B_0 )\), since \( B_0 \) contains the intersections of \( D_0 \) with all other components of \(D_X \), the action of \( g \) is trivial on a neighborhood of the vertex of $D_0$ in \( \mathcal{D}(X, D_X) \). As \( \mathcal{D}(X, D_X) = S^2 \) is connected and the action is simplicial, triviality near \( q \) implies triviality everywhere. Hence, \( \tilde{A}_0 \leqslant K  \leqslant G' \), we have:
\[
[G' : K] \leq [G' : \tilde{A}_0] \leq 12.
\]
Recalling that \( [G : G'] \leq 2\cdot 3 \), we conclude \( [G : K] \leq 72 \) in the dihedral or cyclic case.

Taking the abelian normal subgroup to be $K$ and taking into account the potential index 2 subgroup of $G$, we get a Jordan bound of $144$.
\end{proof}

\section{Weak geometric Jordan bound for $\Bir(\Ptwo)$}

\begin{definition}[{\cite[Section 3]{moraga23},\cite[Definition 1.2.1]{prok17},\cite[Conjecture 8.5]{moraga}}]
A group $\mathcal{G}$ has the \emph{geometric Jordan property} if there exists a bound $J$ such that any finite subgroup $G \leqslant \mathcal{G}$ contains an abelian normal subgroup $A \trianglelefteq G$ with index $[G:A] \leq J$, and $A$ is contained in an algebraic torus of $\mathcal{G}$.

A group $\mathcal{G}$ has the \emph{weak geometric Jordan property} if there exists a bound $J$ such that any finite subgroup $G \leqslant \mathcal{G}$ contains an abelian not necessarily normal subgroup $A \leqslant G$ with index $[G:A] \leq J$, and $A$ is contained in an algebraic torus of $\mathcal{G}$.
\end{definition}

\begin{remark}
Any group, or family of groups, satisfying the weak geometric Jordan property also satisfies the geometric Jordan property, possibly with a larger constant.

Indeed, let $G \leqslant \mathcal{G}$ be a finite subgroup, and suppose that $A \leqslant G$ is an abelian subgroup with $[G:A]\leq J$ contained in an algebraic torus of $\mathcal{G}$. Replacing $A$ with 
\(
\operatorname{Core}_G(A):=\bigcap_{g\in G} gAg^{-1}
\), we get that the index is bounded by $J!$. Similarly, any group satisfying the weak Jordan property also satisfies the Jordan property. One may obtain better bounds than the factorial bound above using \cite[Theorem 1.7]{yasinsky2023jordanconstantcremonagroup}.
\end{remark}

We will establish a weak geometric Jordan bound for $\Bir(\Ptwo)$.

A strategy of embedding a finite abelian group $A'\leqslant \mathrm{Bir}(\PP^2)$ in an algebraic torus is to show that it is inside a connected linear algebraic group, which we may see that it has the property that all finite abelian subgroups are embedded in an algebraic torus up to some index, measured by the torsion index. More explicitly, for a finite automorphism subgroup $G\leqslant \Bir(\Ptwo)$ regularized on a minimal $G$-surface $X$, by the weak Jordan property of $\Bir(\Ptwo)$, we know there exists an abelian subgroup $A\leqslant G$ with $[G:A]\leq 288$ \cite[Proposition 1.2.3]{prok17}. We would like to find an abelian subgroup $A'\leqslant A$ that admits a union of $A'$-orbits of  disjoint $(-1)$-curves, so that we may contract those curves $A'$-equivariantly to obtain another surface $X_0$. We would like $\Aut(X_0)$ to be a connected linear algebraic group. Then $A'$ is contained in an algebraic torus of $\mathrm{Aut}(X_0)$ up to some index. Because $\mathrm{Aut}(X_0) \leqslant \mathrm{Bir}(\mathbb{P}^2)$, we get that the torus is actually contained in $\mathrm{Bir}(\mathbb{P}^2)$. By this strategy, we shall see that the linear algebraic groups that we need to consider are only among $\operatorname{PGL}_3(\mathbb{C})$, $\operatorname{PGL}_2(\mathbb{C})\times\operatorname{PGL}_2(\mathbb{C})$, and $\operatorname{Aut}(F_n) \simeq \left(\mathbb{C}^{n+1} \rtimes \operatorname{GL}(2,\mathbb{C})\right)\Big/ \left\{ \begin{pmatrix}\mu & 0 \\ 0 & \mu\end{pmatrix} \mid \mu^n = 1 \right\}$ where $F_n$ is the Hirzebruch surface with $n\geq 2$ \cite[Exercise 2]{BlancCremonaNotes}. We develop the torsion index statement for these groups as follows:

\begin{definition}[Torsion Index \cite{Grothendieck1958,totaro}]
Let $\mathcal{G}$ be a compact connected Lie group and let $T\leqslant \mathcal{G}$ be a maximal torus. Let $N=\dim_{\mathbb{C}}({\mathcal{G}}/T)$. Every character $\chi:T\to \mathbb{C}^{\times}$ determines a complex line bundle 
$L_{\chi}=\mathcal{G}\times_T \mathbb{C}$ on the flag manifold $\mathcal{G}/T$. Consider the subring of the integral cohomology $H^*(\mathcal{G}/T,\mathbb{Z})$ generated by the Chern classes $c_1(L_\chi)\in H^2(\mathcal{G}/T,\mathbb{Z})$ of these line bundles. The \emph{torsion index} of $\mathcal{G}$ is the smallest positive integer $t(\mathcal{G})$ such that $t(\mathcal{G})$ times the class of a point in $H^{2N}(\mathcal{G}/T,\mathbb{Z})\cong \mathbb{Z}$ lies in this subring.
\end{definition}

\begin{remark}
Every complex linear algebraic group $\mathcal{G}$ admits a maximal compact subgroup $C$, unique up to conjugacy \cite[Chapter XV Theorem 3.1]{hoch}. By convention, we let the torsion index of $\mathcal{G}$ to be $t(\mathcal{G}):=t(C)$. Since the complexification of \(C\) is \(\mathcal{G}_{\mathrm{red}}\), where \(\mathcal{G}_{\mathrm{red}}\) denotes the reductive quotient of \(\mathcal{G}\), we have \(t(\mathcal{G}) = t(\mathcal{G}_{\mathrm{red}})\).
\end{remark}

\begin{theorem}[{\cite[4.8]{reichstein}},{\cite[Theorem 1.2]{totaro}}]\label{pgroup}
Let $\mathcal{G}$ be a compact connected Lie group. Then any abelian $p$-subgroup of $\mathcal{G}$ has a subgroup of index dividing the torsion index $t(\mathcal{G})$ which is contained in a maximal torus of $\mathcal{G}$.
\end{theorem}

This statement extends to arbitrary finite abelian subgroups. It was kindly communicated to us and will appear in forthcoming joint work of Reichstein and Scavia. 

\begin{theorem}[Reichstein--Scavia]\label{abegroup}
Let $\mathcal{G}$ be a complex connected reductive group. Then for any finite abelian subgroup $A$ of $\mathcal{G}$, if every Sylow subgroup of $A$ is contained in a torus, then $A$ is contained in a torus.
\end{theorem}

\begin{corollary}\label{cor}
Let $\mathcal{G}$ be one of the groups $\operatorname{PGL}_3(\mathbb{C})$, $\operatorname{PGL}_2(\mathbb{C})\times\operatorname{PGL}_2(\mathbb{C})$, and $\mathrm{Aut}(F_n)$, $n\geq 1$. For every finite abelian subgroup $A\leqslant \mathcal{G}$ there exists a subgroup of index dividing $t(\mathcal{G})$ contained in a maximal torus of $\mathcal{G}$. Moreover, $t(\operatorname{PGL}_3(\mathbb{C}))=3, t(\operatorname{PGL}_2(\mathbb{C})\times\operatorname{PGL}_2(\mathbb{C})) = 4, t(\mathrm{Aut}(F_n))=1 \text{ if $n$ is odd and } = 2 \text{ if $n$ is even}$.
\end{corollary}

\begin{proof}
 We have that the torsion index satisfies $t(\mathcal{G}\times \mathcal{H})=t(\mathcal{G})t(\mathcal{H})$ and $t(\mathcal{G})=t([\mathcal{G},\mathcal{G}])$; see Totaro \cite{totaro}. One has $t(\operatorname{PGL}_n(\mathbb{C}))=n$, so $t(\operatorname{PGL}_3(\mathbb{C}))=3$ and $t(\operatorname{PGL}_2(\mathbb{C})\times\operatorname{PGL}_2(\mathbb{C}))=4$. For $\mathcal{G}=\mathrm{Aut}(F_n)\simeq\left(\mathbb{C}^{n+1} \rtimes \operatorname{GL}(2,\mathbb{C})\right)\Big/ \left\{ \begin{pmatrix}\mu & 0 \\ 0 & \mu\end{pmatrix} \mid \mu^n = 1 \right\}$, we have $t(\mathcal{G})=t(\mathcal{G}_{\mathrm{red}})=t([\mathcal{G},\mathcal{G}])$, and $[\mathcal{G},\mathcal{G}]\simeq \mathrm{SL}_2(\mathbb C)$ if $n$ is odd and $[\mathcal{G},\mathcal{G}]\simeq \operatorname{PGL}_2(\mathbb{C})$ if $n$ is even. Consequently $t(\mathcal{G})=1$ if $n$ is odd and $t(\mathcal{G})=2$ if $n$ is even.

Let $C$ be a maximal compact subgroup of $\mathcal{G}$. Then $C\leqslant \mathcal{G}_{\mathrm{red}}$ and the complexification of $C$ is $\mathcal{G}_{\mathrm{red}}$. The group $A$ is conjugate to a finite subgroup $A_0\leqslant C$ \cite[Chapter XV Theorem 3.1]{hoch}. By \ref{pgroup}, each Sylow subgroup $A_{0}\{p\}$ contains a subgroup $A_{0}\{p\}'$ of index dividing $t(\mathcal{G})_p$ contained in a real torus of $C$, where $t(\mathcal{G})_p$ is the $p$-primary part of $t(\mathcal{G})$. Hence $A_{0}\{p\}'$ is contained in a complex torus of $\mathcal{G}_{\mathrm{red}}$. By Theorem \ref{abegroup}, the product of the $A_{0}\{p\}'$ has index dividing $t(\mathcal{G})$ and is contained in a torus of $\mathcal{G}_{\mathrm{red}}$. Since the product group is finite, the group is contained in a real torus of $C$. After complexification and conjugation we conclude that $A$ is contained in a torus of $\mathcal{G}$.
\end{proof}

\subsection{Del Pezzo surfaces of degree $d \leq 6$}\mbox{}\\
Following Dolgachev and Iskovskikh \cite[Section 6.1]{dolgachev2009finitesubgroupsplanecremona}, there is a group homomorphism from $\mathrm{Aut}(X)$ to $\mathrm{Weyl}(R)$ where $R$ is of type $E_N$ ($N := 9-d= 6, 7, 8$), $D_5$ ($N = 5$), $A_4$ ($N = 4$), or $A_2 + A_1$ ($N = 3$). The image of $\mathrm{Aut}(X)$ in $\mathrm{Weyl}(R)$ may be identified as the group acting on the $(-1)$-curves. Moreover, this homomorphism is injective for $d \leq 5$ as the kernel $K$ fixes the $N$ exceptional curves of the blow-up $X\to \PP^2$. Contracting the exceptional curves $K$-equivariantly, one gets that $K$ is an automorphism subgroup of $\mathbb P^2$ fixing the $N = 9-d$ points of the images of the exceptionals. If $d\leq 5$, then $K$ is trivial.

For $d \leq 5$, we have an injection $\mathrm{Aut}(X) \hookrightarrow \mathrm{Weyl}(R)$, so for any finite $G\leqslant \mathrm{Aut}(X)$, we may take the abelian subgroup $A$ in the weak geometric Jordan property to be the trivial group, and its index is bounded by $|\mathrm{Aut}(X)|$. For $d=5$, $\mathrm{Aut}(X)=120$; for $d=4$, $\mathrm{Aut}(X)\leq 160$, see \cite[Theorem 1.1]{Hosoh1996} or \cite[Theorem 8.6.6]{Dolgachev2012}; for $d=3$, $\mathrm{Aut}(X)\leq 648$ which is obtained by the Fermat cubic surface, \cite[Theorem 9.5.6]{Dolgachev2012}; for $d=2$, $\mathrm{Aut}(X)\leq 336$ \cite[Section 8.7.3]{Dolgachev2012}; for $d=1$, $\mathrm{Aut}(X)\leq 144$ \cite[Section 8.8.4]{Dolgachev2012}.  

For $d = 6$, let $G\leqslant \Aut(X)$ be a finite subgroup, then we have an exact sequence $1 \to A \to G \to \mathrm{Weyl}(A_2 + A_1)$. Since $A$ fixes all $(-1)$-curves, in particular it fixes the three exceptional curves, so doing $A$-equivariant contraction we get that $A \leqslant \operatorname{PGL}_3(\mathbb{C})$ fixing $3$ points, hence it is abelian by Lemma \ref{abelian} and admits a subgroup with index at most $3$ contained in a torus of $\operatorname{PGL}_3(\mathbb{C}) \leqslant \mathrm{Bir}(\mathbb P^2)$ by Corollary \ref{cor}. The index of $A$ is bounded by $|\mathrm{Weyl}(A_2 + A_1)| = 12$.

So in this case, we get a weak geometric Jordan bound $648$.

\subsection{$\mathbb{P}^2$, $\PP^1\times \PP^1$ and minimal conic bundles $\mathbb{F}_n$ , $n\neq 1$}\mbox{}\\
By Corollary \ref{cor}, the torsion index of $\Aut(\PP^2) = \operatorname{PGL}_3(\mathbb{C})$ is 3. For any finite abelian subgroup $A \leqslant \operatorname{Aut}(\mathbb{P}^1 \times \mathbb{P}^1)=(\operatorname{PGL}_2(\mathbb C) \times \operatorname{PGL}_2(\mathbb C))\rtimes \mathbb{Z}/2$, there exists a subgroup of index at most $2$ in $A$ contained in $\operatorname{PGL}_2(\mathbb{C}) \times \operatorname{PGL}_2(\mathbb{C})$, and a further subgroup of index at most 4 contained in a maximal torus by its torsion index. For $\mathrm{Aut}(F_n),n\geq 2$, the torsion index is at most 2. So by the weak Jordan bound $288$ \cite[Proposition 1.2.3]{prok17}, we get a weak geometric Jordan bound $288\cdot 8$ in this case.

\subsection{Singular conic bundles}\mbox{}\\
Let $X$ be a singular conic bundle, i.e., a blow-up of $F_n$ at a finite set of points, no two lying in a fiber of a ruling. Write \(\mathrm{Aut}(X,\pi)\) for the automorphism group preserving the conic bundle structure. 
For any finite subgroup \(G \leqslant \mathrm{Aut}(X,\pi)\) there exists a short exact sequence
\[
1 \rightarrow G_F \rightarrow G \rightarrow G_B \rightarrow 1,
\]
where \(G_B\) acts on the base $\PP^1$ and \(G_F\) acts on the generic fiber. 

By \cite[Proposition 1.2.3]{prok17}, for any finite $G\leqslant\mathrm{Aut}(X,\pi)$, there exists a finite abelian subgroup $A$ in $G$ with index at most 288. We note that under the decomposition $1 \to A_F \to A \to A_{B} \to 1$, the subgroup $A_B$ is cyclic as it is an abelian subgroup of $\operatorname{PGL}_2(\mathbb{C})$. We would like to find a subgroup $A'$ of $A$ such that a union of $A'$-orbits consists of one component from each singular fiber. Then we can use these disjoint $A'$-orbits of $(-1)$-curves to perform $A'$-equivariant contraction to obtain some $\mathbb{F}_n$. In the case of $\mathbb{F}_1$, we can further contract the unique $(-1)$-curve to $\mathbb{P}^2$.

\begin{lemma}
A finite abelian group $A' \leqslant \Aut(X,\pi)$ does not admit a union of $A'$-orbits containing exactly one component from each singular fiber of the conic bundle $\pi:X\to \PP^1$ if and only if there exists an element $g\in A'$ that exchanges the two components of some singular fiber of $\pi$.
\end{lemma}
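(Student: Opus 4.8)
The plan is to prove both directions of the biconditional, with the reverse (easy) direction motivating the structure of the forward (harder) direction. Let $\pi : X \to \PP^1$ be the conic bundle with singular fibers $F_1, \dots, F_m$, each of which is a union of two $(-1)$-curves $C_i^+ \cup C_i^-$ meeting at a point. The group $A'$ acts on $X$ preserving $\pi$, hence permutes the set of singular fibers and, within the stabilizer of each singular fiber, either fixes each of the two components or swaps them. The key structural input is the exact sequence $1 \to A'_F \to A' \to A'_B \to 1$, where $A'_B \leqslant \mathrm{PGL}(2)$ is cyclic (as noted in the excerpt) and acts on the base, thereby permuting the singular fibers by permuting their images in $\PP^1$.

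First I would prove the contrapositive of the reverse direction, which is immediate: if no $g \in A'$ swaps the two components of any singular fiber, then the assignment $F_i \mapsto \{C_i^+, C_i^-\}$ can be made $A'$-equivariantly into a choice of a single component per fiber. Concretely, since $A'$ never swaps components within a fiber, the action of $A'$ on the $2m$ components $\{C_i^\pm\}$ respects the partition into the two "sheets," so one can choose a section of the two-element sets $\{C_i^+, C_i^-\}$ that is $A'$-invariant — for instance by picking, in each $A'$-orbit of singular fibers, one representative component and propagating it by the group action; the absence of swaps guarantees this is well-defined. This yields a union of $A'$-orbits containing exactly one component from each singular fiber.

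The forward direction is the substantive one, and I would prove its contrapositive: if there is no obstructing swap, then a valid union of orbits exists — but this is exactly what the previous paragraph establishes, so the real content is the other contrapositive, namely that if some $g \in A'$ swaps the two components of a singular fiber $F_i$, then no $A'$-invariant choice of one component per fiber can exist. This is clear on the fiber $F_i$ itself: any set containing exactly one of $C_i^+, C_i^-$ is not preserved by $g$, since $g$ interchanges them; hence no union of $A'$-orbits can contain exactly one component of $F_i$ while being $A'$-stable. Thus the existence of a swapping element is a genuine obstruction.

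The main obstacle I anticipate is making precise the equivariant selection in the reverse direction when the base action $A'_B$ permutes the singular fibers nontrivially: one must check that choosing components orbit-by-orbit over the base is consistent, i.e., that the stabilizer in $A'$ of a singular fiber, acting without swaps, indeed fixes each chosen component so that propagation around the orbit does not force a contradiction. Here the cyclicity of $A'_B$ and the hypothesis of "no swaps" combine to ensure the fiber-stabilizers act on each component set as the identity, so the selection is coherent. I would phrase the argument as: the obstruction to a global section of the $\mathbb{Z}/2$-bundle $\{C_i^+, C_i^-\}_i$ over the orbit space is measured precisely by whether some element of $A'$ induces the nontrivial transposition on a fiber, which is the swap condition, completing the equivalence.
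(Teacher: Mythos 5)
Your proposal is correct and takes essentially the same route as the paper: a swapping element immediately obstructs any $A'$-stable selection, and conversely, the absence of swaps ensures the $A'$-orbit of any single component meets each singular fiber in at most one component, so picking representative components orbit-by-orbit yields the desired union. The only cosmetic difference is your appeal to the cyclicity of $A'_B$ and to a global ``two-sheet'' structure, neither of which is needed (nor used in the paper), since the no-swap hypothesis alone makes the propagation well-defined.
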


\begin{proof}
 If such $g$ exists, then there does not exist such a union of $A'$-orbits. Conversely, assume that no element of $A'$ exchanges the two components of any singular fiber.
We construct a union of $A'$-orbits containing exactly one component from each singular fiber. Pick one component $C_1$ of a singular fiber and consider its $A'$-orbit
$
U_1 := \bigcup_{a\in A'} a(C_1).
$
Since no element of $A'$ exchanges components within the same fiber, $U_1$ contains at most one component from each singular fiber.
If some singular fibers have no components contained in $U_1$, pick one component $C_2$ from such a fiber and let
$
U_2 := \bigcup_{a\in A'} a(C_2).
$
Again, $U_2$ contains at most one component from each fiber, and by construction $U_1$ and $U_2$ have no common fiber.
Repeating this procedure for the remaining singular fibers yields finitely many disjoint $A'$-orbits
$
U_1,\ldots,U_r$
whose union
$U := U_1 \cup \cdots \cup U_r$
contains exactly one component from each singular fiber.
Hence, such a union of $A'$-orbits exists.
\end{proof}
We have the following classification of finite abelian groups of $\Bir(\Ptwo)$.
\begin{theorem}[{\cite[Theorem 6]{blanc2006finiteabeliansubgroupscremona}}]\label{thm:finite-abelian-cremona}
The isomorphism classes of finite abelian subgroups of the Cremona group are the following:
\[
\mathbb{Z}/m\mathbb{Z}\times \mathbb{Z}/n\mathbb{Z}\ \text{ for any } m,n\ge 1;\qquad
\mathbb{Z}/2n\mathbb{Z}\times (\mathbb{Z}/2\mathbb{Z})^2\ \text{ for any } n\ge 1;
\]
\[
(\mathbb{Z}/4\mathbb{Z})^2\times \mathbb{Z}/2\mathbb{Z};\qquad
(\mathbb{Z}/3\mathbb{Z})^3;\qquad
(\mathbb{Z}/2\mathbb{Z})^4.
\]
\end{theorem}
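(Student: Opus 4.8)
The plan is to follow the minimal-model strategy recalled in Section 2: any finite subgroup $G \leqslant \mathrm{Bir}(\mathbb{P}^2)$ is conjugate to a subgroup of $\mathrm{Aut}(X)$ for some $G$-minimal rational surface $X$, and such an $X$ is either a del Pezzo surface or carries a $G$-equivariant conic bundle structure. Specializing to the case where $G$ is finite \emph{abelian}, I would run through these two families of models and, in each case, extract the possible isomorphism types of $G$ together with an explicit geometric realization showing that each listed type actually occurs.

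First I would dispose of the conic bundle case using the exact sequence $1 \to G_F \to G \to G_B \to 1$, where $G_B \leqslant \mathrm{PGL}_2(\mathbb{C})$ is the action on the base $\mathbb{P}^1$ and $G_F$ is the fiberwise part acting on the generic fiber $\cong \mathbb{P}^1$. Since $G$ is abelian, both $G_F$ and $G_B$ are finite abelian subgroups of $\mathrm{PGL}_2(\mathbb{C})$, hence each is cyclic or isomorphic to $(\mathbb{Z}/2\mathbb{Z})^2$, and additional $\mathbb{Z}/2\mathbb{Z}$ factors can be forced by the involutions exchanging the two components of a singular fiber. Analyzing how these pieces assemble, and when the extension splits, should produce exactly the two infinite families $\mathbb{Z}/m\mathbb{Z} \times \mathbb{Z}/n\mathbb{Z}$ and $\mathbb{Z}/2n\mathbb{Z} \times (\mathbb{Z}/2\mathbb{Z})^2$. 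The family $\mathbb{Z}/m\mathbb{Z} \times \mathbb{Z}/n\mathbb{Z}$ is realized already inside the maximal torus of $\mathrm{PGL}_3 = \mathrm{Aut}(\mathbb{P}^2)$ by diagonal automorphisms, so this direction of the classification is essentially free.

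Next I would treat del Pezzo surfaces degree by degree. For $d \geq 5$ the automorphism groups are small enough that every finite abelian subgroup is conjugate into the torus family above, so no new isomorphism types appear. The substance is in degrees $d \leq 4$, where one must compute the maximal finite abelian subgroups of $\mathrm{Aut}(X)$ via the Weyl-lattice action on $\mathrm{Pic}(X)$ (as in Section 7.1) combined with the concrete geometry of the exceptional curves. This is where the three sporadic groups enter: $(\mathbb{Z}/2\mathbb{Z})^4$ from a diagonal degree $4$ del Pezzo surface $\{\sum x_i^2 = \sum \lambda_i x_i^2 = 0\}$ via the sign-change involutions, $(\mathbb{Z}/3\mathbb{Z})^3$ from the Fermat cubic surface $\{x_0^3 + x_1^3 + x_2^3 + x_3^3 = 0\}$ whose automorphism group is $(\mathbb{Z}/3\mathbb{Z})^3 \rtimes S_4$, and $(\mathbb{Z}/4\mathbb{Z})^2 \times \mathbb{Z}/2\mathbb{Z}$ from a maximally symmetric degree $2$ del Pezzo surface, where the $\mathbb{Z}/2\mathbb{Z}$ factor is the (central) Geiser involution.

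The hard part will be the low-degree del Pezzo analysis: one must simultaneously verify that no abelian group outside the two infinite families and the three sporadic types can arise, and that each sporadic type is genuinely realized by a regular \emph{automorphism} action rather than merely a birational one. Ruling out extraneous abelian subgroups requires controlling the faithful action on the finite configuration of $(-1)$-curves (the $16$ lines in degree $4$, the $27$ in degree $3$, the $56$ exceptional classes in degree $2$) and on the induced representation on $\mathrm{Pic}(X) \otimes \mathbb{Q}$: a finite abelian $G \leqslant \mathrm{Aut}(X)$ maps into the relevant Weyl group, and the subgroup acting trivially on $\mathrm{Pic}(X)$ fixes the exceptional curves, so after equivariant contraction Lemma \ref{tan} and Lemma \ref{abelian} pin it down through its linearized action at a fixed point. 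Assembling these local linearization constraints and the lattice-theoretic constraints into the exact finite list, with no omissions and no spurious types, is the delicate bookkeeping that makes the theorem nontrivial.
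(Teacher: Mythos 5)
There is nothing in the paper to compare your attempt against: this statement is not proved in the paper at all. It is imported from Blanc \cite[Theorem 6]{blanc2006finiteabeliansubgroupscremona} and used as a black box in Section 7, solely to bound the $2$-rank of a finite abelian subgroup of $\mathrm{Bir}(\mathbb{P}^2)$ by $4$ in the singular conic bundle lemma. That said, your outline does follow the strategy of the actual proof in the literature (regularize on a $G$-minimal rational surface, then split into conic bundles and del Pezzo surfaces), and the realizations you name for the sporadic groups --- $(\mathbb{Z}/2\mathbb{Z})^4$ on a diagonal degree $4$ del Pezzo surface, $(\mathbb{Z}/3\mathbb{Z})^3$ on the Fermat cubic, and $(\mathbb{Z}/4\mathbb{Z})^2\times\mathbb{Z}/2\mathbb{Z}$ on a degree $2$ surface via the Geiser involution --- are the standard ones.

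As a proof, however, the attempt has genuine gaps, and two intermediate claims are wrong as stated. First, the assertion that for $d\geq 5$ every finite abelian subgroup is conjugate into the torus family $\mathbb{Z}/m\mathbb{Z}\times\mathbb{Z}/n\mathbb{Z}$ fails already on the degree $6$ del Pezzo surface: the $2$-torsion $T[2]\cong(\mathbb{Z}/2\mathbb{Z})^2$ of the maximal torus of $\mathrm{PGL}(3)$ together with the lift $s_1$ of the standard quadratic involution (which acts on $T$ by inversion, hence commutes with $T[2]$) generates an abelian group $(\mathbb{Z}/2\mathbb{Z})^3\leqslant\mathrm{Aut}(X)$ of rank $3$, which is not of the form $\mathbb{Z}/m\mathbb{Z}\times\mathbb{Z}/n\mathbb{Z}$; it belongs to the second family, so new types do appear there. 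Second, the conic bundle case does not yield ``exactly the two infinite families'': in the extension $1\to G_F\to G\to G_B\to 1$ with $G_F$, $G_B$ each cyclic or Klein four, the Klein-by-Klein case also allows $(\mathbb{Z}/2\mathbb{Z})^4$, so that sporadic type is not confined to del Pezzo surfaces. Most seriously, the completeness direction --- that no abelian group outside the five listed types occurs --- is the entire content of Blanc's theorem, and your sketch defers it to ``bookkeeping'': for del Pezzo surfaces of degree $\leq 4$ the automorphism groups vary with moduli, so one must control abelian subgroups of $\mathrm{Aut}(X)$ for every surface in those families (not just the maximally symmetric ones), degrees $1$ and $2$ admit large cyclic and mixed groups that must be checked against the list, and the conic bundle argument needs the fact that $G_F$ embeds in $\mathrm{PGL}_2(\mathbb{C}(t))$, whose finite abelian subgroups are cyclic or $(\mathbb{Z}/2\mathbb{Z})^2$. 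In the context of this paper the correct move is simply to cite Blanc, exactly as the paper does; a self-contained proof would be a substantial piece of work well beyond this outline.
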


\begin{lemma}
For any finite subgroup $G\leqslant \Bir(\Ptwo)$ regularized on a $G$-minimal conic bundle $X$, there exists a subgroup $A'\leqslant G$ such that it admits a union of $A'$-orbits containing exactly one component from each singular fiber of the conic bundle and $[G:A'] \leq 288\cdot 16$.
\end{lemma}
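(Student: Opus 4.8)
The plan is to build the subgroup $A'$ inside the abelian subgroup $A$ already furnished by the weak Jordan property. By \cite{prok17} we have $A\leqslant G$ abelian with $[G:A]\leq 288$, so it suffices to produce $A'\leqslant A$ with $[A:A']\leq 16$ that admits a union of $A'$-orbits meeting each singular fiber in exactly one component; then $[G:A']=[G:A]\,[A:A']\leq 288\cdot 16$. By the characterization in the preceding lemma, this orbit condition is equivalent to asking that $A'$ be \emph{swap-free}, meaning no element of $A'$ exchanges the two components of a singular fiber it stabilizes. Thus the entire problem reduces to finding a swap-free subgroup of index at most $16$.

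The mechanism I would use is to detect swapping by $\mathbb{Z}/2$-valued characters. For each singular fiber $F$, an element stabilizing $F$ acts on its two components through a sign, giving a homomorphism $\sigma_F\colon \mathrm{Stab}_A(F)\to \mathbb{Z}/2$, and swapping means $\sigma_F=1$. Since a transposition has order $2$, the odd-order part of $A$ can never swap, so writing $A=A_2\times A_{2'}$ (the $2$-primary part and the odd part) I would take $A'=A_2'\times A_{2'}$ and only work inside $A_2$. On the fiberwise subgroup $A_F\leqslant A$ of elements stabilizing every fiber, the maps $\sigma_F$ are honest homomorphisms $A_F\to\mathbb{Z}/2$, so $A_F^0:=\bigcap_F \ker\sigma_F$ is a genuine subgroup and is swap-free. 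Every character $A_F\to\mathbb{Z}/2$ factors through $A_F\otimes\mathbb{F}_2$, so the image of $(\sigma_F)_F$ is an elementary abelian $2$-group whose rank is at most the $2$-rank of $A$. By the classification Theorem~\ref{thm:finite-abelian-cremona} this $2$-rank is at most $4$, whence $[A_F:A_F^0]\leq 2^4=16$, and this is exactly where the constant $16$ enters.

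The hard part will be the elements acting nontrivially on the base $\PP^1$, for which $\sigma_F$ is only defined on a proper stabilizer, so that a naive common kernel could a priori have index far larger than $16$. To control these I would exploit that $A_B\leqslant\mathrm{PGL}(2)$ is cyclic and that a nontrivial element of $A_B$ fixes at most two points of the base, hence stabilizes at most two singular fibers and contributes only a bounded amount of extra swapping; the cyclicity of $A_B$ is what lets one organize all these contributions so that the swap-free subgroup still arises as the kernel of a map into an elementary abelian $2$-group of rank $\le 4$. The genuine crux, which I expect to be the main obstacle, is ruling out (or absorbing) \emph{deep} swaps produced by high $2$-power elements that cyclically permute many singular fibers while a single power realizes a component exchange: a priori such a configuration in a large cyclic $2$-group would force index larger than $16$, and the argument must use both the cyclic structure of $A_B$ and the rank-$4$ bound from Theorem~\ref{thm:finite-abelian-cremona} to show these cannot occur for a rational $G$-minimal conic bundle. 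Once the base-moving elements are handled, combining the fiberwise estimate with this analysis yields $A'$ with $[A:A']\leq 16$, and the preceding lemma then supplies the required union of $A'$-orbits, giving $[G:A']\leq 288\cdot 16$.
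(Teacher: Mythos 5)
Your opening reduction is exactly the paper's: take the abelian $A\leqslant G$ with $[G:A]\leq 288$ from \cite{prok17}, and reduce to finding a ``swap-free'' subgroup $A'\leqslant A$ of index at most $16$, the factor $16$ coming from the bound $2^4$ on the $2$-rank in Theorem~\ref{thm:finite-abelian-cremona}. Your fiberwise step (the common kernel of the sign homomorphisms on the subgroup stabilizing every fiber) is also the paper's kernel $S$. But the proof is not complete: the part you yourself call ``the genuine crux'' --- elements with nontrivial image in $A_B$ --- is precisely where the paper's actual work lies, and your proposal only asserts that an argument ``must'' exist using cyclicity of $A_B$ and the rank bound. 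Moreover, your framing of that crux is off in a way that matters. Since $A_B$ is cyclic with two common fixed points $0,\infty\in\PP^1$, every nontrivial element of $A_B$ fixes exactly those two points; hence a base-moving element can only swap the components of the (at most two) singular fibers over $0$ and $\infty$. These swaps are \emph{not} controlled by your $A_F^0$ at all, since the relevant elements lie outside $A_F$; and your index budget of $16$ is already entirely spent inside $A_F$, leaving nothing for them. The paper handles them by a separate homomorphism $A\to(\mathbb{Z}/2\mathbb{Z})^i$, $i\leq 2$, defined on \emph{all} of $A$, taking its kernel $A_0$, and then blowing down the fibers over $0,\infty$ equivariantly so that they disappear from the problem; the accounting $2^i\cdot 2^{r-i}=2^r\leq 16$ is how the constant survives.

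The second missing piece is that ``deep swaps'' are not ruled out --- they genuinely can occur --- and the paper does not show they cannot; it \emph{avoids} them by a constructive choice. The danger is that the subgroup generated by non-swapping elements may contain swappers: if $a$ is a lift of a generator of $A_{0,B}$ and $a^n\in A_{0,F}$ is nontrivial ($n=|A_{0,B}|$), then $a^n$ can exchange the two components of a fiber that $a$ permutes cyclically. The paper's resolution is to choose $a$ with $\langle a\rangle\cong A_{0,B}$ under the projection (so $\langle a\rangle\cap A_{0,F}=1$), set $A'=\langle S,a\rangle$, and then argue: if $x=sg$ with $s\in S$, $g\in\langle a\rangle$ swaps the components of some singular fiber, then $g$ swaps them (as $s$ fixes all components), so $g$ stabilizes that fiber and hence fixes its base point, which is neither $0$ nor $\infty$; thus the image of $g$ in $\mathrm{PGL}(2)$ fixes three points and is trivial, so $g\in\langle a\rangle\cap A_{0,F}=1$, a contradiction. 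Your proposal contains neither the blow-down step, nor the choice of the complement $\langle a\rangle$, nor this three-fixed-points argument, so as written it establishes only the fiberwise bound and leaves the statement unproved.
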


\begin{proof}
By \cite[Proposition 1.2.3]{prok17}, there exists a finite abelian subgroup $A\leqslant G$ with index bounded by $288$ and we know that $A_B$ is cyclic. Consider the conic bundle $X\to \PP^1$. We may assume that $A_B$ fixes $0$ and $\infty\in\PP^1$. If there are singular fibers at $0$ or $\infty\in \PP^1$, then $A$ fixes the singular fibers at $0$ and $\infty$. Consider the group homomorphism $A \to (\mathbb{Z}/2\mathbb{Z})^i$, $i=1$ or $2$ defined by the actions on the $i$ number of singular fibers supported on $0$ and $\infty$, i.e., an automorphism is mapped to $0$ at the $i$-th coordinate if it fixes the two components of the $i$-th singular fiber, and to $1$ if it swaps the two components. The kernel is a subgroup $A_0$ whose elements fix the components of the singular fibers at $0$ and $\infty$. We can perform $A_0$-equivariant blow-down to get another conic bundle with no singular fibers at $0$ and $\infty$. Let $m$ be the number of singular fibers after blow-down.
Consider the exact sequence $$0\to A_{0,F}\to A_0\to A_{0,B}\to 0,$$ consider the map $A_{0,F}\to (\mathbb Z/2\mathbb Z)^m$ given by the actions on the $m$ singular fibers. Let $S$ be the kernel of this map and let $a\in A_0$ such that $\langle a\rangle \cong A_{0,B}$ through the projection. We let $A'= \langle S, a\rangle$. Then $[A:A']=[A:A_0][A_0:A'] \leq [A:A_0][A_{0,F}:S]\leq 2^i 2^{r-i}  = 2^r \leq 16$ where $r$ is the 2-rank of $A$ which is bounded by 4 by the above classification of finite abelian groups. Moreover, we see that $A'$ does not contain any $x$ that swaps the two components of a singular fiber. If not then consider $x=sg$ where $s\in S$ and $g\in\langle a \rangle$ such that $x$ swaps the two components of a singular fiber. Then since $s$ fixes all components and $x$ swaps some singular fiber, then it follows that $g$ swaps the components of the singular fiber so in particular it fixes the base point of the singular fiber, which is not $0$ or $\infty$, so the image of $g$ in the base $\PP^1$ fixes three points and therefore it is trivial in $A_{0,B}$, and so $g\in A_{0,F}$ so $g = 1$ as $g\in \langle a \rangle \cong A_{0,B}$, which shows that $x=s$ does not swap any two components of a singular fiber which is a contradiction. So $A'$ admits a union of $A'$-orbits containing exactly one component from each singular fiber of the conic bundle.
\end{proof}

\begin{proposition}
$\mathrm{Bir}(\mathbb P^2)$ satisfies the geometric Jordan property. A weak geometric Jordan bound is $2^{11} \cdot 3^2$. 
\end{proposition}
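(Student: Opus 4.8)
The plan is to combine the case analysis already carried out in this section, case by case over the possible $G$-minimal models of a rational surface, and take the worst constant among them. First I would recall that any finite subgroup $G \leqslant \mathrm{Bir}(\mathbb{P}^2)$ can be regularized on a $G$-minimal rational surface $X$, and by the classification such $X$ is either $\mathbb{P}^2$, a Hirzebruch surface $\mathbb{F}_n$, a singular conic bundle, or a del Pezzo surface of degree $d \leq 6$. Since every finite automorphism group of a conic bundle preserves the conic bundle structure, the classification is exhaustive. The strategy throughout has been to produce, inside $G$, an abelian subgroup $A'$ together with a union of $A'$-orbits of disjoint $(-1)$-curves, so that contracting these curves $A'$-equivariantly lands in a surface $X_0$ whose automorphism group is a connected linear algebraic group; then by the earlier lemma the abelian $A'$ lies in a maximal torus of $\mathrm{Aut}(X_0) \leqslant \mathrm{Bir}(\mathbb{P}^2)$.

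Next I would assemble the index bounds obtained in the preceding subsections. For del Pezzo surfaces with $d \leq 5$ the homomorphism $\mathrm{Aut}(X) \hookrightarrow \mathrm{Weyl}(R)$ is injective, so taking $A'$ trivial gives index bounded by $|\mathrm{Aut}(X)|$, which is at most $648$ (attained by the Fermat cubic). For $d = 6$ the subgroup fixing the three exceptional curves is abelian and contained in a torus of $\mathrm{PGL}(3)$, giving index at most $|\mathrm{Weyl}(A_2 + A_1)| = 12$. For $\mathbb{P}^2$ and $\mathbb{F}_n$ with $n \neq 1$, the automorphism group is already a connected linear algebraic group, so by the equivalence lemma the weak geometric constant equals the weak Jordan constant $288$ from \cite[Proposition 1.2.3]{prok17}. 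The remaining and largest case is the singular conic bundle, where the preceding lemma furnishes an abelian $A' \leqslant G$ with $[G : A'] \leq 288 \cdot 16$ admitting the required union of $A'$-orbits of fiber components; contracting these yields a Hirzebruch surface (and, in the $\mathbb{F}_1$ case, $\mathbb{P}^2$) with connected automorphism group, embedding $A'$ in a torus.

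Finally I would take the maximum of all these constants. The dominant bound is $288 \cdot 16 = 4608$ coming from the singular conic bundle case, which exceeds every del Pezzo bound and the bare $288$ from the minimal rational surfaces. Hence $\mathrm{Bir}(\mathbb{P}^2)$ satisfies the weak geometric Jordan property with constant $288 \cdot 16$.

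I expect the main subtlety to lie not in any single estimate but in verifying exhaustiveness and compatibility: one must confirm that in each case the contracted surface $X_0$ genuinely has a connected linear algebraic automorphism group so that the torus-embedding step applies, and that the contraction is legitimately $A'$-equivariant. The singular conic bundle case is the crux, since there the orbit construction and the $2$-rank bound of $16$ (from the classification in Theorem~\ref{thm:finite-abelian-cremona}) must interact correctly with the cyclicity of $A_B$; this has already been handled in the preceding lemma, so here the essential work is simply to collect the four cases and record that the supremum of the constants is $288 \cdot 16$.
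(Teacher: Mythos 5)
Your proposal is correct and follows essentially the same route as the paper: regularize $G$ on a $G$-minimal model, invoke the case bounds already established in this section (trivial subgroup for del Pezzo $d\leq 5$ with index $\leq 648$, index $\leq 12$ for $d=6$, index $\leq 288$ for $\mathbb{P}^2$ and $\mathbb{F}_n$ with $n\neq 1$, and the conic bundle lemma giving $[G:A']\leq 288\cdot 16$ with the equivariant contraction to $\mathbb{F}_n$ or $\mathbb{P}^2$), and take the maximum, which is $288\cdot 16$. Your added remarks on exhaustiveness of the case list and on verifying that each contracted model has connected linear algebraic automorphism group match what the paper's preceding lemmas already handle, so nothing is missing.
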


\begin{proof}
For a finite subgroup $G$ regularized on a conic bundle, by the previous lemma, we see that there exists an abelian $A'\leqslant G$ that admits such orbits and $[G:A']\leq 288 \cdot 16$. Using this union of orbits, we perform $A'$-equivariant blow-down to obtain some $F_n$ $n\geq 0$, in the case of $F_1$, we may further blow down to obtain $\Ptwo$. By Corollary \ref{cor}, we may find a further subgroup of index at most $4$ contained in a torus. 

For del Pezzo surfaces of $d \leq 6$, $\mathbb P^2$, $\PP^1 \times \PP^1$, and conic bundles $F_n, n\geq 2$, we may find a weak geometric Jordan bound of $288 \cdot 8$ as discussed above. So a weak geometric Jordan bound is given by $288\cdot 16 \cdot 4=2^{11} \cdot 3^2$.
 \end{proof}

\bibliographystyle{plain}
\bibliography{reference}

\end{document}